\newcommand{\eChar}{\begin{enumerate}[(i)]}
\newcommand{\eCharR}{\begin{enumerate}[(a)]}
\newcommand{\eBr}{\begin{enumerate}[(1)]}
\newcommand{\Abstract}
\title
{Vertex isoperimetry on signed graphs and spectra of non-bipartite Cayley and Cayley sum graphs
}
\author[1]{Chunyang Hu}\author[2]{Shiping Liu}
\affil[1,2]{School of Mathematical Sciences, University of Science and Technology of China, Hefei 230026, China}
\affil[1]{chunyanghu@mail.ustc.edu.cn}
\affil[2]{spliu@ustc.edu.cn}
\date{\today}
\date{\today}
\theoremstyle{plain}
\newtheorem{lemma}{Lemma}[section]
\newtheorem{theorem}[lemma]{Theorem}
\newtheorem{proposition}[lemma]{Proposition}
\theoremstyle{definition}
\newtheorem{definition}[lemma]{Definition}
\newtheorem{remark}[lemma]{Remark}
\newtheorem{example}[lemma]{Example}
\numberwithin{equation}{section}
\begin{document}

\maketitle

\pagestyle{plain}

\begin{abstract}
  For a non-bipartite finite Cayley graph, we show the non-trivial eigenvalues of its normalized adjacency matrix lie in the interval $$\left[-1+\frac{ch_{out}^2}{d},1-\frac{Ch_{out}^2}{d}\right],$$ for some absolute constants $c$ and $C$, where $h_{out}$ stands for the outer vertex boundary isoperimetric constant. This improves upon recent obtained estimates aiming at a quantitative version of a result due to Breuillard, Green, Guralnick and Tao. 
  We achieve this by extending the work of Bobkov, Houdr\'e and Tetali on vertex isoperimetry to the setting of signed graphs. 
  We further extend our interval estimate to the settings of vertex transitive graphs and Cayley sum graphs. As a byproduct, we answer positively open questions proposed recently by Moorman, Ralli and Tetali.
\end{abstract}

\section{Introduction}

We consider the spectral theory of finite connected graphs in this article. Let $G=(V,E)$ be a finite graph. Consider its normalized adjacency matrix $D^{-1}A$ and normalized Laplacian $I-D^{-1}A$. Here we denote by $A$ the adjacency matrix, $D$ the diagonal matrix given by vertex degrees, and $I$ the identity matrix of size $N$ by $N$ with $N=|V|$. Let us list the eigenvalues counting multiplicities of the two matrices, respectively, as below:
\begin{align*}
& t_1\leq t_2\leq \cdots\leq t_{N-1}\leq t_N,\\
& \lambda_1\leq \lambda_2\leq \cdots\leq \lambda_{N-1}\leq \lambda_N.
\end{align*}
Observe that $\lambda_k=1-t_{N-k+1}$.
It is well known that the eigenvalues of the normalized adjacency matrix lie in the interval $[-1,1]$ and, consequently, the eigenvalues of the normalized Laplacian matrix lie in $[0,2]$. Notice that $t_N=1$. Moreover, we have $t_{N-1}=1$ if and only if the graph is disconnected, and $t_1=-1$ if and only if it is bipartite.

Eigenvalue estimates of graphs in terms of the \emph{vertex isoperimetric constant} $h_{out}$ have been first studied by Alon \cite{Alon} in 1985, inspired by the work of Cheeger \cite{Cheeger} on Riemannian manifolds. Notice that the vertex isoperimetric constant $h_{out}$ was also called \emph{magnification} \cite{Alon} or \emph{vertex expansion}, see, e.g., \cite{MRT}.
Estimates of graph eigenvalues via the \emph{Cheeger constant} $h$ (also known as \emph{edge expansion} or \emph{isoperimetric constant}) have been extensively studied, see, e.g., \cite{AM, BKW, Chung, D, LS, LOT, Miclo, Miclo2, Mohar, SJ}. 
In contrast to the Cheeger constant $h$ defined via \emph{edge boundaries} (see (\ref{eq:edgebdy}) and (\ref{eq:Cheegerhhout}) below), the vertex isoperimetric constant $h_{out}$ is based on \emph{vertex boundaries} (see (\ref{eq:outbdy}) and (\ref{eq:Cheeger_hout}) below). 
Using the techniques from linear algebra and max-flow min-cut theorem, Alon \cite{Alon} (see also \cite[Section 2.4]{Chung}) showed that 
\begin{equation}\label{eq:Alon}
    \lambda_2=1-t_{N-1}\geq \frac{h_{out}^2}{2d_{\max}(2+h_{out})},
\end{equation}
where $d_{\max}$ stands for the maximal vertex degree.

Consider a Cayley graph $C(X,S)$ of a finite group $X$ with respect to a set $S\subset X$, where $S$ is closed under taking inverses and does not include the identity element.
In particular, $C(X,S)$ is a regular graph.
In a recent work, Breuillard, Green, Guralnick, and Tao \cite[Proposition E.1]{BGGT} observed the following special feature of Cayley graphs which does not hold for arbitrary regular graphs: if a non-bipartite Cayley graph is an expander graph in the sense that $h_{out}$ is bounded away from $0$, then the spectrum of the normalized adjacency matrix except the trivial one $t_N=1$ is not only bounded away from $1$ but also from $-1$. Building upon the arguments in \cite[Proposition E.1]{BGGT}, Biswas \cite{Biswas_EJC} established a quantitative version of this observation: For a $d$-regular non-bipartite Cayley graph, the eigenvalues $\{t_i\}_{i=1}^{N-1}$ of the normalized adjacency matrix lie in the interval 
\[\left[-1+\frac{h_{out}^4}{2^9d^6(d+1)^2}, 1-\frac{h_{out}^2}{2d^2}\right].\] 
Later, Biswas and Saha \cite{BS_AC} improved it to be
\[\left(-1+\frac{h_{out}^4}{2^9d^8}, 1-\frac{h_{out}^2}{2d^2}\right].\] 
In fact, the upper bound of the above intervals can be improved by Alon's estimate (\ref{eq:Alon}).
Since $h_{out}\leq 2$, we derive from (\ref{eq:Alon}) that the upper bound of the interval can be improved to be
\begin{equation}\label{eq:Alonhoutsquare}
  1-\frac{h_{out}^2}{2d(2+h_{out})}\leq 1-\frac{Ch_{out}^2}{d}  
\end{equation}
for an absolute constant $C$.
The same improvement as (\ref{eq:Alonhoutsquare}) can be achieved by a companion result due to  
Bobkov, Houdr\'e and Tetali \cite{BHT} telling that
\begin{equation}\label{eq:BHT}
    \lambda_2=1-t_{N-1}\geq\frac{\left(\sqrt{1+h_{out}}-1\right)^2}{4d}.
\end{equation}
The proof of (\ref{eq:BHT}) is built upon a functional analytic method involving a Poincar\'e-type functional constant $\lambda_{\infty}$. 

Employing the dual Cheeger inequality due to Bauer-Jost \cite{BJ} and Trevisan \cite{Trevisan}, Moorman, Ralli, and Tetali \cite{MRT} further improved the previous estimates to be 
\begin{equation*}
    \left[ -1+\frac{ch_{out}^2}{d^2},1-\frac{Ch_{out}^2}{d}\right],
\end{equation*}
for some absolute constants $c$ and $C$. Considering the cycle graphs, the estimate above is tight up to a factor depending on $d$, see \cite[Example 2.7]{MRT} and \cite[Proposition 7.4]{Liu}.

In this article, we improve the lower bound of the interval as follows.
\begin{theorem}\label{thm:Main1}
For a non-bipartite finite Cayley graph with degree $d$, the eigenvalues of the corresponding  normalized adjacency matrix except the largest one lie in the interval 
 \begin{equation}\label{eq:resultA}
   \left[ -1+\frac{ch_{out}^2}{d},1-\frac{Ch_{out}^2}{d}\right], 
\end{equation}   
for some absolute constant $c$ and $C$. 
\end{theorem}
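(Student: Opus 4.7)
The upper bound $t_{N-1} \leq 1 - C h_{out}^2/d$ in (\ref{eq:resultA}) is not new: since $h_{out} \leq 2$, Alon's inequality (\ref{eq:Alon}) gives it directly (one may take $C = 1/8$), and the Bobkov-Houdr\'e-Tetali inequality (\ref{eq:BHT}) provides an alternative derivation. The task therefore reduces to establishing the new lower bound $t_1 \geq -1 + c h_{out}^2/d$, and it is here that both the Cayley hypothesis and non-bipartiteness will be used.

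Following the hint in the abstract, my plan is to reinterpret $1 + t_1$ as the smallest eigenvalue of a signed normalized Laplacian and to prove an analog of (\ref{eq:BHT}) in that setting. Attach the all-negative signature $\sigma \equiv -1$ to $G$; the normalized signed Laplacian equals $I + D^{-1}A$, its eigenvalues are exactly $1 + t_i$, and non-bipartiteness forces $1 + t_1 > 0$. The core technical step is a signed outer vertex isoperimetric inequality: define an outer vertex boundary for a signed graph using ordered pairs of disjoint subsets $(A,B) \subset V$, where the boundary counts the vertices witnessing the failure of $(A,B)$ to induce a balanced $2$-coloring, and let $h_{out}^\sigma$ be the corresponding signed isoperimetric constant. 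I expect the signed BHT inequality
\begin{equation*}
    1 + t_1 \,\geq\, \frac{\bigl(\sqrt{1+h_{out}^\sigma}-1\bigr)^{2}}{4d}
\end{equation*}
to hold for arbitrary signed graphs. The proof should adapt the functional analytic strategy of Bobkov-Houdr\'e-Tetali: introduce the signed counterpart of their Poincar\'e-type constant $\lambda_\infty$, and carry out the coarea/level-set analysis on the two-sided sublevel sets $\{f \geq t\} \cup \{f \leq -t\}$ instead of the one-sided sublevel sets used in the unsigned case. This adaptation, while delicate, is largely an exercise in carefully tracking sign conventions.

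The main obstacle, and the step where the Cayley hypothesis is essential, is the comparison $h_{out}^\sigma \gtrsim h_{out}$ for \emph{non-bipartite} Cayley graphs. The idea is to exploit vertex-transitivity: any pair $(A,B)$ realizing a small $h_{out}^\sigma$ describes a near-balanced $2$-coloring of $G$, and for every $g \in X$ the translate $(gA, gB)$ realizes the same signed isoperimetric ratio. Since $G$ is non-bipartite, no global balanced $2$-coloring exists, so some translate must overlap the original partition in the wrong way. Choosing such a translate carefully and combining it with $(A,B)$ via elementary set-theoretic operations, I plan to produce a vertex set whose ordinary outer vertex boundary is controlled, up to an absolute factor, by the signed boundary of $(A,B)$. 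Once established, this converts the signed BHT inequality into $1 + t_1 \geq c h_{out}^2/d$, the desired lower bound. The construction of the right translate — in particular quantifying the degree of misalignment forced by non-bipartiteness and ensuring it enters linearly rather than with additional degree factors — is, I expect, where the real work lies, since a naive argument loses an extra factor of $d$ and recovers only the Moorman-Ralli-Tetali bound.
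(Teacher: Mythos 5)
Your proposal follows essentially the same route as the paper: specialize to the all-negative signature $\sigma\equiv -1$ so that $1+t_1$ becomes the bottom eigenvalue of the signed normalized Laplacian, prove a signed analog of the Bobkov-Houdr\'e-Tetali inequality via a two-sided level-set (coarea) argument, and then use the Cayley structure plus non-bipartiteness to bound the resulting signed outer vertex isoperimetric constant from below by $h_{out}$. The only packaging difference is that the paper routes the final comparison through the intermediary constant $\beta_{out}$ of Moorman-Ralli-Tetali (showing $h_{out}^\sigma\geq\beta_{out}$ as a general identity, then invoking their $h_{out}\leq 200\,\beta_{out}$ for non-bipartite Cayley graphs), whereas you sketch re-deriving the translation argument directly; those are the same mathematics, and the translation argument is indeed the one the paper re-runs with modifications to handle Cayley sum and vertex-transitive graphs in Theorem \ref{thm:Main3}.
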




That is, we improve the order of $d$ in the lower bound of the interval. In our result, the lower bound of the interval matches nicely with the upper bound. However, we do not know whether the order of $d$ is tight or not. Since the eigenvalues of a bipartite graph are symmetric about $0$, our result implies that any eigenvalue which is not equal to $1$ or $-1$ of a finite Cayley graph lies in the interval (\ref{eq:resultA}).
 
We achieve this by extending the work of Bobkov, Houdr\'e and Tetali \cite{BHT} to signed graphs. A signed graph \cite{Harary, Z} $(G,\sigma)$ is a graph $G=(V,E)$ equipped with a signature $\sigma: E\to \{+1,-1\}$. The corresponding normalized signed Laplacian matrix $\Delta^{\sigma}$ is given by $I-D^{-1}A^{\sigma}$, where $A^\sigma$ stands for the signed adjacency matrix, i.e., the $(x,y)$-entry of $A$ is given by $\sigma_{xy}:=\sigma(\{x,y\})$ whenever $x\sim y$. 

We list the eigenvalues of the matrix $\Delta^\sigma$ counting multiplicities as follows:
\[\lambda^{\sigma}_1\leq \lambda^{\sigma}_2\leq \cdots\leq \lambda^{\sigma}_{N-1}\leq \lambda^{\sigma}_N.\]
The spectral theory of signed graphs have been well studied, see, e.g., \cite{AL, BSS, BGKLM, LLPP, LMP, SW} and the survey \cite{BCKW}. 

In order to extend the work of Bobkov, Houdr\'e and Tetali \cite{BHT}, we first introduce the following two concepts on signed graphs. One is a signed version $h_{out}^\sigma$ of the vertex isoperimetric constant (see Definition \ref{def:beta}), which is inspired by the works on signed Cheeger constants \cite{AL,BSS, LLPP}. The other one is a signed version $\lambda_{\infty}^{\sigma}$ of the Poincar\'e-type functional constant \`{a} la Bobkov, Houdr\'e and Tetali (see Definition \ref{def:BHT}). Combining the methods from \cite[Section 3]{BHT} and the proof of signed Cheeger inequalities in \cite{AL, LLPP}, we establish the following estimate.
\begin{theorem} \label{thm: Main2}
For a $d$-regular signed graph $(G,\sigma)$, we have 
    \begin{equation}\label{eq:key}
    2d\lambda_1^\sigma\geq \lambda_{\infty}^{\sigma}\geq \left(\sqrt{1+h_{out}^\sigma}-1\right)^2.
\end{equation}
\end{theorem}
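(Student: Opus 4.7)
The plan is to prove the two inequalities in \eqref{eq:key} separately. The first, $2d\lambda_1^\sigma \geq \lambda_\infty^\sigma$, is a short comparison of Rayleigh-type quotients; the second, $\lambda_\infty^\sigma \geq (\sqrt{1+h_{out}^\sigma}-1)^2$, is the substantive step and follows the co-area template of Bobkov--Houdr\'e--Tetali \cite{BHT}, combined with the switching device used in the signed Cheeger inequality proofs of \cite{AL, LLPP}.

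For the first inequality, I would start from the variational formulas
$$\lambda_1^\sigma = \min_{f \not\equiv 0}\frac{\sum_{\{x,y\}\in E}(f(x)-\sigma_{xy}f(y))^2}{\sum_x d\, f(x)^2}, \qquad \lambda_\infty^\sigma = \min_{f \not\equiv 0}\frac{\sum_x d\,\max_{y\sim x}(f(x)-\sigma_{xy}f(y))^2}{\sum_x d\, f(x)^2},$$
the latter being Definition~\ref{def:BHT} read off in the $d$-regular case. Plugging a $\lambda_1^\sigma$-minimiser into the $\lambda_\infty^\sigma$-quotient and applying the elementary bound $\max_{y\sim x}(f(x)-\sigma_{xy}f(y))^2 \leq \sum_{y\sim x}(f(x)-\sigma_{xy}f(y))^2$, together with $d$-regularity and double counting of edges, immediately yields $\mathcal{E}_\infty^\sigma(f) \leq 2d\,\mathcal{E}^\sigma(f)$, hence the inequality.

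The second inequality is where the signed structure requires real care. Fix an extremiser $f$ of $\lambda_\infty^\sigma$. Following the philosophy of \cite{AL, LLPP}, I would introduce a switching $\tau\colon V\to\{\pm 1\}$ tailored to the sign pattern of $f$ so that on the edges where the switched signature $\sigma_{xy}\tau(x)\tau(y)$ is $+1$, the signed energy of $f$ coincides with the unsigned $\infty$-energy of $g:=|\tau f| \geq 0$, while the edges on which it is $-1$ are precisely the ones accounted for by the signed boundary appearing in $h_{out}^\sigma$ (Definition~\ref{def:beta}). Having reduced to the nonnegative function $g$, one carries out the co-area layering of the BHT argument: writing $\sum_x g(x)^2 = \int_0^\infty |A_t|\,dt$ with $A_t=\{g^2>t\}$, a Cauchy--Schwarz step relates $\mathcal{E}_\infty^\sigma(f)$ to $\int_0^\infty |B(A_t)|\,dt$, where $B(\cdot)$ denotes the appropriate outer vertex boundary. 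Applying the pointwise bound $h_{out}^\sigma\,|A_t|\le |B(A_t)|$ at each level and carrying out the quadratic optimisation exactly as in \cite[Section~3]{BHT} yields the sharp constant $(\sqrt{1+h_{out}^\sigma}-1)^2$.

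The main obstacle is the alignment of definitions in the second step: the switching $\tau$ and the signed vertex isoperimetric constant $h_{out}^\sigma$ must be set up so that, after passing to $g=|\tau f|$, each edge contributing to the numerator of $\lambda_\infty^\sigma$ either behaves as a genuine unsigned edge in the co-area analysis or contributes to the signed outer boundary measured by $h_{out}^\sigma$, without double counting and without loss in the constant. A secondary technical point is the reduction to nonnegative $g$ when $f$ is not already sign-consistent with $\tau$, which must be carried out without decreasing the Rayleigh quotient. Once this signed/unsigned dictionary is in place via Definition~\ref{def:beta} and Definition~\ref{def:BHT}, the remaining analytic computation mirrors the unsigned BHT proof verbatim, and the two inequalities combine to give \eqref{eq:key}.
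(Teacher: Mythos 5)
Your outline matches the paper's proof: the first inequality is exactly the Rayleigh-quotient comparison in Lemma~\ref{lem:lambdainfty} (using $\sup_{y\sim x}(\cdot)^2\leq\sum_{y\sim x}(\cdot)^2$), and the second inequality is Theorem~\ref{thm:main}, proved by the Bobkov--Houdr\'e--Tetali co-area/Cauchy--Schwarz scheme combined with the switching device from \cite{AL,LLPP} (Lemma~\ref{lemma:ineq c}). The only superficial difference is that the paper does not first switch and then cut the nonnegative $g=|f|$: it works directly with the $\{+1,-1,0\}$-valued cutoff $Y_{\sqrt t}(f)$, whose restriction to the level set $V^f(\sqrt t)$ simultaneously provides a candidate switching for $\iota^{\sigma,\pi}_\infty\bigl(V^f(\sqrt t)\bigr)$ and, through its zero set, the outer boundary term---so your ``$B(A_t)$'' must be understood as $2\iota^{\sigma,\pi}_\infty(A_t)+\pi(\partial_{out}(A_t))$, not merely the outer vertex boundary.
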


  The estimate (\ref{eq:key}) is tight up to a factor depending on $d$, which can be seen from the curvature consideration, see Remark \ref{rem:buser}.
  
  While Theorem \ref{thm: Main2} plays a key role in the proof of Theorem \ref{thm:Main1}, it is of its own interest. Indeed, a consequence of Theorem \ref{thm: Main2} tells for any $d$-regular graph $G$ that

\begin{equation}\label{eq:intro1}
    2-\lambda_N=1+t_1\geq \frac{\left(\sqrt{1+\beta_{out}}-1\right)^2}{2d},
\end{equation}
where $\beta_{out}$ is the so-called \emph{outer vertex bipartiteness constant} \cite[Definition 2.1]{MRT} (see also (\ref{eq:betaout})). We refer to Proposition \ref{prop:h=beta} for the close relation between the constants $h_{out}^\sigma$ and $\beta_{out}$.
The estimate (\ref{eq:intro1}) actually answers an open question of Moormann, Ralli and Tetali \cite[Section 3]{MRT} positively. Indeed, our approach is inspired by their open question asking whether Bobkov, Houdr\'e and Tetali's strategy can be applied to the outer vertex bipartiteness constant $\beta_{out}$ or not.

We further extend our Theorem \ref{thm:Main1} to vertex transitive graphs, and even certain class of non-vertex-transitive graphs. 

\begin{theorem}\label{thm:Main3}
For a non-bipartite finite vertex transitive graph, or a non-bipartite finite Cayley sum graph, the eigenvalues of the corresponding  normalized adjacency matrix except the largest one lie in the interval (\ref{eq:resultA})
for some absolute constants $c$ and $C$. 
\end{theorem}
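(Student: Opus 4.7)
The proof has the same shape as that of Theorem \ref{thm:Main1}. The upper bound $t_{N-1}\leq 1-Ch_{out}^2/d$ in \eqref{eq:resultA} is just Alon's inequality \eqref{eq:Alon}, which holds for any $d$-regular graph. For the lower bound $t_1\geq -1+ch_{out}^2/d$, I would invoke \eqref{eq:intro1}---a consequence of Theorem \ref{thm: Main2} valid for every $d$-regular graph---together with the elementary inequality $(\sqrt{1+x}-1)^2\geq x^2/8$ for $x\in[0,2]$, to obtain
\[
1+t_1\geq\frac{(\sqrt{1+\beta_{out}}-1)^2}{2d}\geq\frac{\beta_{out}^2}{8d}.
\]
Thus the task reduces to establishing an absolute-constant comparison $\beta_{out}\geq c_0 h_{out}$ for the two graph classes in the statement.

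For non-bipartite vertex transitive graphs, I would adapt the averaging argument used in the Cayley case of Theorem \ref{thm:Main1}: that argument needs only a transitive subgroup $\Gamma\leq\operatorname{Aut}(G)$ (available here by hypothesis) and a bounded-length odd closed walk through each vertex (guaranteed by non-bipartiteness combined with vertex transitivity). Given a near-optimizer $(A,A^+,A^-)$ for $\beta_{out}$, averaging the bipartition over $\Gamma$-translates and using translates of a fixed short odd closed walk to convert the internal-edge penalties inside $A^\pm$ into outer-vertex-boundary penalties produces a set $B$ with $|\partial^+B|\leq C\beta_{out}|B|$, yielding $h_{out}\leq C\beta_{out}$.

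For non-bipartite Cayley sum graphs $\operatorname{Cay}^+(X,S)$ with $X$ abelian and $S=-S$, vertex transitivity fails, but the bipartite double cover $\tilde G$ is itself a genuine Cayley graph: the map $(x,0)\mapsto(x,0)$, $(x,1)\mapsto(-x,1)$ identifies $\tilde G$ with $\operatorname{Cay}(X\times\mathbb{Z}/2\mathbb{Z},\,S\times\{1\})$, using $S=-S$ essentially. Combined with the folkloric identity $\beta_{out}(G)\asymp h_{out}(\tilde G)$ on bipartite double covers, the required comparison reduces to $h_{out}(\tilde G)\geq c'h_{out}(G)$, which I would prove by projecting an optimizer for $h_{out}(\tilde G)$ down to $G$ and using non-bipartiteness of $G$ to rule out the degenerate case in which the two level projections form complementary halves of $V$.

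The main obstacle is carrying the comparison $\beta_{out}\geq c_0 h_{out}$ with an absolute constant---independent of $d$ and $|V|$---in both settings. Non-bipartiteness enters quantitatively at exactly this step, since otherwise $\beta_{out}=0$ while $h_{out}$ can be of order $1$; tracking the distribution of penalties across $\Gamma$-orbits (respectively across the two sheets of the double cover) without concentration loss is the technical heart of the argument.
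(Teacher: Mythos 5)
Your reduction to the inequality $\beta_{out}\geq c_0 h_{out}$ is exactly the right framework and matches the paper (the upper bound from Alon, the lower bound from \eqref{eq:intro1} with $\beta_{out}\leq 1$). The trouble is in how you propose to establish $\beta_{out}\geq c_0 h_{out}$ for the two graph classes, where your sketch either loses generality or does not amount to a workable argument.

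For Cayley sum graphs, you restrict to $X$ abelian and $S=-S$, but the theorem is stated for arbitrary finite groups with $S$ a normal set, and $S$ need not be symmetric. Your bipartite-double-cover identification $\tilde G\cong\operatorname{Cay}(X\times\mathbb{Z}/2\mathbb{Z},\,S\times\{1\})$ uses commutativity and $S=-S$ essentially, so it simply does not cover the stated result. The paper's route (Theorem~\ref{thm:cayleysumgraph}) avoids the double cover entirely: normality of $S$ means conjugation $\phi_g(x)=gxg^{-1}$ is an automorphism of $C_\Sigma(X,S)$, and the Moorman--Ralli--Tetali proof of Theorem~\ref{thm:MRThoutbetaout} goes through verbatim with $gLg^{-1}$ and $gRg^{-1}$ in place of left translates, plus a case analysis that shows either $\beta_{out}\geq\epsilon$, or $|Y|$ is in a good range, or a subgroup of index $2$ missing $S$ is produced (contradicting non-bipartiteness). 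Separately, the ``folkloric identity'' $\beta_{out}(G)\asymp h_{out}(\tilde G)$ on bipartite double covers is not something you can cite without a proof; it would itself need an argument, and it is not obviously true with an absolute constant independent of $d$.

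For vertex-transitive graphs, ``averaging the bipartition over $\Gamma$-translates and using translates of a fixed short odd closed walk'' is not an argument in a form I can check; the diameter of the graph, and hence the length of a short odd closed walk through a given vertex, is not bounded by an absolute constant, so the ``bounded-length'' assertion is unjustified, and it is unclear how a walk of unbounded length converts internal-edge penalties into vertex-boundary penalties with an absolute-constant loss. The paper's proof (Theorem~\ref{thm:vertextransitivegraph}) instead fixes a minimal transitive subgroup $X\leq\operatorname{Aut}(G)$, shows (or derives a contradiction from the failure of) the existence of $g\in X$ with $|L\cap gL|\in(\delta|L|,(1-\delta)|L|)$ by partitioning $X$ into $X_1\cup X_2$ and analyzing the two $X_1$-orbits $O_1,O_2$, and uses the Birkhoff--von~Neumann theorem on the induced regular subgraph on $O_j$ to contradict non-bipartiteness when no such $g$ exists. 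Your last paragraph correctly identifies that tracking the constants is the crux, but the specific mechanism you propose does not supply it. You should instead follow the $X_1/X_2$ (resp.\ $X_1/X_2$ plus conjugation-automorphism) structure of the actual proofs.
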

We refer to Section \ref{section:Cayley} for the differences between Cayley and Cayley sum graphs. In particular,  Cayley sum graphs need not be vertex transitive. It has been asked by Moorman, Ralli and Tetali in \cite{MRT} as an open question whether it is possible to extend their result \cite[Theorem 2.6]{MRT} to the setting of Cayley sum graphs. Our Theorem \ref{thm:Main3} answers positively this open question and improves the related estimates in \cite{BS_AC,BS_2021,Saha}. Our proof is built upon Theorem \ref{thm: Main2} and the proof method of Moorman-Ralli-Tetali \cite[Theorem 2.4]{MRT} modified with key ideas from Biswas-Saha \cite{BS_2021} and a very recent preprint of Saha \cite{Saha}.

Our Theorem \ref{thm: Main2} is also extendible. In fact, we show an eigenvalue estimate similar to (\ref{eq:key}) in terms of a vertex isoperimetric constants  defined by both \emph{inner and outer boundaries} on signed graphs. Furthermore, we extend our results to more general connection graphs, with new introduced Cheeger type constants closely related to the work of Bandeira, Singer and Spielman \cite{BSS} and \cite{LLPP,LMP}.

The paper is structured as below. We collect preliminaries about Cayley and Cayley sum graphs, the vertex bipartiteness constant and the Cheeger constant on signed graphs in Section \ref{preliminary}. In Section \ref{new_concept}, we introduce new constants on signed graphs, including outer and symmetric vertex isoperimetric constants and a Poincar\'e-type functional constant. In Section \ref{signed_graph}, we prove the corresponding Cheeger-type inequalities on signed graphs and derive Theorem \ref{thm: Main2} as a corollary. In Section \ref{Cayley_graph}, we prove our main Theorem \ref{thm:Main1} for Cayley graphs and Theorem \ref{thm:Main3} for vertex transitive graphs and Cayley sum graphs.  In Section \ref{extensions}, we extend our Cheeger-type eigenvalue estimates to connection graphs.
\section{Preliminaries}\label{preliminary}
In this section, we collect preliminaries on Cayley graphs, Cayley sum graphs, vertex bipartiteness constant, signed graphs and signed Cheeger constants.

Let $G=(V,E)$ be an undirected finite graph with a vertex set $V$ and an edge set $E$, where $E$ might contain self-loops, i.e., edges connecting a vertex to itself. 
We write $x\sim y$ if $\{x,y\}\in E$. In particular, we have $x\sim x$ whenever there is a self-loop at $x$. For a subset $A$ of either $V$ or $E$, we denote its cardinality by $|A|$. The degree of a vertex $x$ is given by $d_{x}=|\{y\in V \,|\, y \sim x\}|$. 
Notice that, a self-loop is counted once here.
The volume of a subset $V_{1}\subseteq V$ is defined as $\mathrm{vol}(V_{1}):=\sum_{x\in V_{1}}d_{x}.$ A $d$-regular graph is a graph with all vertices having the same degree $d$. 

\subsection{Cayley graphs and Cayley sum graphs}\label{section:Cayley}
We compare some basic aspects of Cayley graphs and Cayley sum graphs in this subsection. While Cayley graphs have been studied extensively, much less results are known for Cayley sum graphs. Cayley sum graphs, also called addition Cayley graphs, addition graphs or sum graphs, were first explored for abelian groups \cite{Alon2, CGW, Chung2, Green, GLS}, and later extended to arbitrary groups \cite{AT,BS_AC}. 

We first recall their definitions. A subset $S$ of a group $X$ is called \emph{symmetric} if it is closed under taking inverses, that is, $x^{-1}\in S$ for any $x\in S$; $S$ is called \emph{normal} if it is closed under taking conjugation, that is, $xSx^{-1}=S$ holds for any $x\in X$.    
\begin{definition}\label{defn:CayleyCayleysum}
    Let $X$ be a group and $S$ be a set of elements in $X$. 
    \begin{itemize}
        \item A Cayley graph $C(X,S)$ of the group $X$ with respect to a symmetric set $S$ is a graph with vertex set $X$ such that any two vertices $x$ and $y$ are adjacent if and only if $xy^{-1}\in S$. 
        \item A Cayley sum graph $C_{\Sigma}(X,S)$ of the group $X$ with respect to a normal set $S$ is a graph with vertex set $X$ such that any two vertices $x$ and $y$ are adjacent if and only if $xy\in S$. 
    \end{itemize}
\end{definition}
Notice that the restrictions on the set $S$ in Definition \ref{defn:CayleyCayleysum} are sufficient and necessary to ensure the resulting graphs to be undirected, see \cite[Section 1.3.18]{BM} for Cayley graphs and \cite[Lemma 2.6]{BS_AC} for Cayley sum graphs.

Although the definitions of Cayley and Cayley sum graphs look similar, their properties are quite different.

A Cayley graph $C(X,S)$ does not contain self-loops if and only if $\mathrm{id}_X\notin S$. In particular, self-loops appear either at every vertex or at no vertex of a Cayley graph. Hence, the usual definition of a Cayley graph \cite[Section 1.3.18]{BM} requires further that $\mathrm{id}_X\notin S$. On the other hand, a Cayley sum graph $C_\Sigma(X,S)$ does not contain self-loops if and only if $S$ is square free, i.e., there is no $x\in X$ such that $x^2\in S$. It can happen that only some of the vertices of a Cayley sum graph possess self-loops. For convenience, we allow self-loops for both Cayely and Cayley sum graphs in this paper.  

A Cayley graph $C(X,S)$ is connected if and only if the set $S$ is a generating set of the group $X$. The connectivity of a Cayley sum graph $C_{\Sigma}(X,S)$ has been characterized in \cite[Theorems 1 and 2]{AT}. In particular, it is proved that a Cayley sum graph $C_{\Sigma}(X,S)$ of a finite group $X$ is connected if and only if $S$ is a generating set and the subgroup $\langle S^{-1}S\rangle$ generated by $S^{-1}S$ has index no larger than $2$. This is a generalization of the earlier result \cite[Proposition 2.3]{CGW} for finite abelian groups.

A Cayley graph $C(X,S)$ is non-bipartite if and only if the group $X$ has no subgroup of index $2$ which is disjoint from the set $S$, see, e.g., \cite[Proposition E.1]{BGGT} and \cite[Proposition 2.6]{Biswas_EJC}. For Cayley sum graphs, it is observed in \cite[Proposition 2.5]{BS_AC} the following sufficient condition for bipartiteness: If the group $X$ contains a subgroup of index two which does not intersect the set $S$, then
the Cayley sum graph $C_\Sigma(X,S)$ is bipartite. We refer to \cite{CGW} for characterizations of bipartiteness for Cayley sum graphs of finite abelian groups. Actually, it is proved in \cite[Proposition 4.4]{CGW} that any connected bipartite Cayley sum graph of a finite abelian group, which has no self-loops, is a Cayley graph.

The structure of a Cayley graph $C(X,S)$ and a Cayley sum graph $C_{\Sigma}(X,S)$, even of the the same finite group $X$ and set $S$, can be very different.
\begin{example}\label{examplecc}
    Let $X=\mathbb{Z}_{n}:=\mathbb{Z}/n\mathbb{Z}$ with $n$ odd and $S=\{+1,-1\}$. Then $S$ is symmetric and normal. The Cayley graph $C(\mathbb{Z}_{n},\{+1,-1\}\})$ is an $n$-cycle, while the Cayley sum graph $C_{\Sigma}(\mathbb{Z}_{n},\{+1,-1\})$ is a path with two self-loops at its two end vertices, respectively. 
\end{example}

\subsection{Vertex-transitive graphs}

Let's recall the definition of vertex-transitive graph here.
\begin{definition}
     A graph $G=(V,E)$ is called vertex-transitive if its automorphism group $\mathrm{Aut}(G)$ acts transitively on $V$. That is, for any two vertices $x,y\in V$, there is an automorphism of $G$ mapping $x$ to $y$.
\end{definition}

Fix a vertex $v\in V$, its stabilizer subgroup $G_{v}$ is defined as
$$G_{v}:=\{g\in \mathrm{Aut}(G)|g(v)=v\}.$$
That is, $G_{v}$ is the subgroup consist of all the automotphisms in $\mathrm{Aut}(G)$ that map the vertex $v$ to itself. In a vertex transitive graph, all stabilizer subgroups $G_v, v\in V$ are conjugate in $\mathrm{Aut}(G)$  and hence, in particular, have the same size. The index of $G_v$ in $\mathrm{Aut}(G)$ is given by $|\mathrm{Aut}(G):G_v|=|\mathrm{Aut}(G)|/|G_v|=|V|$, see, e.g.,  \cite[Chapter 16]{Biggs}.

All Cayley graphs $C(X,S)$ are vertex-transitive. However, the converse is not true. For example, the Petersen graph is vertex-transitive but not Cayley. 

A Cayley sum graph $C_{\Sigma}(X,S)$ may not be vertex-transitive. For example, the graph $C_{\Sigma}(\mathbb{Z}_{n},\{+1,-1\})$ in Example \ref{examplecc} is not vertex-transitive, since two vertices have self-loops while the others do not have. For examples of non-vertex-transitive Cayley sum graphs without self-loops, we refer to \cite[Remark 2.8]{CGW} and 
\cite[Example 2.3]{ZY}.

\subsection{Cheeger constant and vertex isoperimetry constants}
For a subset $V_1\subseteq V$, its edge boundary $\partial (V_1)$ is defined as
\begin{equation}
    \partial (V_1)=\{\{x,y\}\in E \,|\, x\in V_{1},\,\,y\notin V_{1}\},\label{eq:edgebdy}
\end{equation}
while its outer vertex boundary $\partial_{out}(V_{1})$ and inner vertex boundary $\partial_{in}(V_{1})$ are defined as 
\begin{equation}
    \partial_{out}(V_{1})=\{y\in V\setminus V_{1} \,|\, \exists\, x\in V_{1}, x\sim y\}\label{eq:outbdy}
\end{equation} and  
\begin{equation}
\partial_{in}(V_{1})=\{x\in V_{1}\,|\,\exists\, y\in V\setminus V_{1},y\sim x\},\label{eq:inbdy}
\end{equation}
respectively. Moreover, the symmetric vertex boudary $\partial_S(V_1)$ is defined via 
\[\partial_S(V_1)=\partial_{in}(V_1)\cup \partial_{out}(V_1).\]
Clearly, we have $\partial_S(V_1)=\partial_S(V\setminus V_1)$.

For a $d$-regular graph, the Cheeger constant $h$ and the outer vertex isoperimetric constant $h_{out}$ is given by
\begin{equation}\label{eq:Cheegerhhout}
   h:=\min_{\substack{\emptyset\neq V_1\subseteq V\\\mathrm{vol}(V_1)\leq \frac{1}{2}\mathrm{vol}(V)}}\frac{|\partial(V_1)|}{\mathrm{vol}(V_1)}=\min_{\substack{\emptyset\neq V_1\subseteq V\\|V_1|\leq \frac{1}{2}|V|}}\frac{|\partial(V_1)|}{d|V_1|},
\end{equation}
and
\begin{equation}\label{eq:Cheeger_hout}
h_{out}:=\min_{\substack{\emptyset\neq V_1\subseteq V\\|V_1|\leq \frac{1}{2}|V|}}\frac{|\partial_{out}(V_1)|}{|V_1|}.
\end{equation}
It is straightforward to check that $h_{out}\geq h\geq \frac{1}{d}h_{out}$. 


\subsection{Vertex bipartiteness constant}

For a $d$-regular graph $G=(V,E)$, Trevisan \cite{Trevisan} defines a \emph{bipartiteness constant}
\begin{equation}\label{eq:Trevisan}
    \beta=\min_{\substack{L,R\subseteq V\\L\cap R=\emptyset,L\cup R\neq \emptyset}}\frac{e(L,L)+e(R,R)+|\partial(L\cup R)|}{d|L\cup R|},
\end{equation}
where $e(L,L):=\sum_{x\in L}\sum_{y\in L}a_{xy}$ with $a_{xy}=1$ if $x$ and $y$ are adjacent, and $a_{xy}=0$ otherwise.
Observe that we count in $E(L,L)$ every edge twice. Notice that $\beta=0$ if and only if $G$ has a bipartite connected component. Indeed, Trevisan \cite{Trevisan}, independently Bauer and Jost \cite{BJ}, show that 
\begin{equation}\label{eq:TBJ}
    \frac{\beta^2}{2}\leq 2-\lambda_N\leq 2\beta.
\end{equation} 
We note that Bauer and Jost introduce a so-called \emph{dual Cheeger constant} which equals $1-\beta$. Actually, Bauer and Jost explicitly carried out their results on graphs allowing multiple edges and multiple self-loops.

 Motivated by the work of Trevisan and Bauer-Jost, Moorman, Ralli and Tetali \cite{MRT} introduce the \emph{outer vertex bipartiteness constant} $\beta_{out}$ as follows:
\begin{equation}\label{eq:betaout}
    \beta_{out}=\min_{\substack{L,R\subseteq V\\L\cap R=\emptyset,L\cup R\neq \emptyset}}\frac{I(L)+I(R)+|\partial_{out}(L\cup R)|}{|L\cup R|},
\end{equation}
where $I(L)$ is the number of vertices in $L$ with a neighbor also in $L$, that is, 
\[I(L)=|\{x\in L\,|\,\exists \, y\in L,\,y\sim x\}|.\]
Observe that 
\begin{equation}\label{eq:betaoutbound}
    \beta_{out}\leq \min_{\substack{L,R\subseteq V\\L\cap R=\emptyset,L\cup R=V}}\frac{I(L)+I(R)}{|L\cup R|}\leq 1.
\end{equation}
Moreover, it is direct to check \cite[Theorem 2.3]{MRT} for a $d$-regular graph that
\begin{equation}\label{eq:betaoutbetadbetaout}
    \beta_{out}\geq \beta\geq \frac{1}{d}\beta_{out}.
\end{equation}

\subsection{Signed graphs}
A signed graph $(G,\sigma)$ is a finite graph $G=(V,E)$ equipped with a signature function $\sigma:E\rightarrow \{+1,-1\}$. 
To switch the signature $\sigma$ by a function $\tau:V\rightarrow \{+1,-1\}$ is to change the signature on each edge $x\sim y$ into
$$\sigma^{\tau}_{xy}=\tau(x)\sigma_{xy}\tau(y).$$
 And $\tau$ is called a \emph{switching function}. 
 Note that the signature of a self-loop is unchanged. 
 Two signatures $\sigma$ and $\sigma'$ are said to be \emph{switching equivalent} if there exists a switching function $\tau$ such that $\sigma'=\sigma^{\tau}$. We call a simple closed path in $(G,\sigma)$ a cycle. A cycle can be a self-loop, a triangle, etc. The signature of a cycle in $(G,\sigma)$ is defined as the multiplication of signatures on all edges of the cycle. A signed graph is called \emph{balanced} if all the signatures of its cycles are $+1$ \cite{Harary} (see also \cite[Section 2]{Z}). The following lemma provides an equivalent definition of balancedness via switching. 
\begin{lemma}[Zaslavsky \cite{Z}]\label{lem:Z}
A signed graph $(G,\sigma)$ is balanced if and only if it is switching equivalent to the all $+1$ signature.
\end{lemma}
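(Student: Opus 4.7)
The plan is to prove both implications of the equivalence, with all of the content concentrated in the forward direction.

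For the easy direction, I would first observe that switching preserves the signature of every cycle: for any cycle $x_1 \sim x_2 \sim \cdots \sim x_k \sim x_1$ and any switching function $\tau$, the cycle signature under $\sigma^\tau$ is
\[\prod_{i=1}^k \tau(x_i)\sigma_{x_i x_{i+1}}\tau(x_{i+1}) \;=\; \left(\prod_{i=1}^k \tau(x_i)\right)^2 \prod_{i=1}^k \sigma_{x_i x_{i+1}} \;=\; \prod_{i=1}^k \sigma_{x_i x_{i+1}},\]
where $x_{k+1}:=x_1$; for self-loops, invariance is built into the definition of switching. Hence if $\sigma$ is switching equivalent to the all $+1$ signature, every cycle of $(G,\sigma)$ has signature $+1$, so $(G,\sigma)$ is balanced.

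For the converse, I would reduce to the connected case by treating each connected component separately and gluing the resulting switching functions. Fix a root vertex $x_0$, and for every vertex $x$ choose a walk $P_x$ from $x_0$ to $x$; then set $\tau(x) := \prod_{e\in P_x}\sigma(e)$, with $\tau(x_0)=+1$. The key point is that $\tau$ is well-defined independently of the walk choice. Given two walks $P,P'$ from $x_0$ to $x$, concatenating $P$ with the reverse of $P'$ yields a closed walk at $x_0$; edges traversed twice contribute $\sigma(e)^2=+1$, while the edges traversed an odd number of times form a subgraph in which every vertex has even degree, hence decompose into edge-disjoint cycles. Since $(G,\sigma)$ is balanced, each such cycle carries signature $+1$, so the closed walk has signature product $+1$, giving $\prod_{e\in P}\sigma(e)=\prod_{e\in P'}\sigma(e)$.

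With $\tau$ in hand, I would verify that $\sigma^\tau\equiv +1$. For any edge $xy$, the walk $P_x$ followed by the edge $xy$ followed by the reverse of $P_y$ is a closed walk at $x_0$, so by the same cycle-decomposition argument its signature product is $+1$, which reads $\tau(x)\sigma_{xy}\tau(y)=+1$, i.e.\ $\sigma^\tau_{xy}=+1$. A self-loop at $x$ is itself a cycle, so balancedness forces $\sigma_{xx}=+1$, and switching leaves it unchanged. The step requiring the most care is the cycle-decomposition of a closed walk: one must justify that after cancelling doubly-traversed edges, the residual even-degree subgraph really splits into edge-disjoint cycles so that the balancedness hypothesis can be applied componentwise. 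This can be done by a standard inductive Eulerian argument, removing one cycle at a time from the even-degree subgraph until no edges remain.
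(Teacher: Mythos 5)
The paper does not actually prove Lemma~\ref{lem:Z}; it is stated as a known result and attributed to Zaslavsky~\cite{Z}, so there is no in-paper argument to compare against. Your proposal supplies the standard proof of Harary's balance theorem, and it is correct. The forward direction (switching preserves cycle signatures, hence switching-equivalent to all $+1$ implies balanced) is stated cleanly. For the converse, the construction $\tau(x)=\prod_{e\in P_x}\sigma(e)$ via a rooted spanning walk is the usual one, and you correctly identify that the crux is well-definedness. Your justification — the symmetric difference of two $x_0$--$x$ walks is a closed walk, whose odd-multiplicity edges form an even-degree subgraph that decomposes into edge-disjoint simple cycles (Veblen), each of which has positive sign by balancedness — is sound, as is the verification that $\tau(x)\sigma_{xy}\tau(y)=+1$ by appending the edge $xy$ and reversing $P_y$. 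You also correctly note that the self-loop case must be handled separately, since switching leaves self-loop signatures fixed, and that balancedness directly forces any self-loop sign to be $+1$. The reduction to connected components is fine because switching functions are defined pointwise and the definition of balancedness is cycle-local. In short, the argument closes all the gaps it opens; the one step you flag as needing care (the Eulerian cycle decomposition of the parity subgraph) is indeed the only nontrivial combinatorial input, and the standard inductive argument you sketch resolves it.
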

For any function $f:V\to \mathbb{R}$, we define the corresponding normalized Laplacian $\Delta^\sigma$ as 
\begin{equation}\label{eq:signedLaplacian}
    \Delta^\sigma f(x)=\frac{1}{d_x}\sum_{y:y\sim x}(f(x)-\sigma_{xy}f(y)),\,\,\text{for any}\,\,x\in V.
\end{equation}
The normalized signed adjacency operator $D^{-1}A^\sigma$ is given by
\begin{equation}\label{eq:signedAdj}
    D^{-1}A^\sigma f(x)=f(x)-\Delta^\sigma f(x),\,\,\text{for any}\,\,x\in V.
\end{equation}
The normalized Laplacian $\Delta^\sigma$ has $N=|V|$ real eigenvalues lying in the interval $[0,2]$, listed with multiplicity as 
\[\lambda_1^\sigma \leq \cdots\leq \lambda^\sigma_N.\]
It is well known that $\lambda_1^\sigma=0$ if and only if $(G,\sigma)$ has a balanced connected component; $\lambda^\sigma_N=2$ if and only if $(G,-\sigma)$ has a balanced connected component, see, e.g., \cite{AL}.

\subsection{Signed Cheeger constant}
We recall the signed Cheeger constant introduced in \cite{AL,LLPP}, see also \cite{BSS}.
We use the following frustration index to quantify how far a signed graph or subgraph is from being balanced. For a subset $V_1\subseteq V$, define 
\begin{equation}\label{eq:edgeiota}
    \iota^\sigma(V_1)=\min_{\tau:V_{1}\to \{+1,-1\}}\frac{1}{2}\sum_{x\in V_1}\sum_{y\in V_{1},y\sim x}|\tau(x)-\sigma_{xy}\tau(y)|.
\end{equation}
Notice that $|\tau(x)-\sigma_{xy}\tau(y)|=|1-\sigma^\tau_{xy}|$. 
According to Lemma \ref{lem:Z}, $\iota^{\sigma}(V_{1})=0$ if and only if the induced signed subgraph of $V_{1}$ is balanced.

The signed Cheeger constant is defined as below.
\begin{definition}[\cite{AL,LLPP}]
Let $(G,\sigma)$ be a signed graph. We define the signed Cheeger constant $h^\sigma$ as
\[h^\sigma=\min_{\emptyset\neq V_1\subseteq V}\frac{\iota^\sigma(V_1)+|\partial (V_1)|}{\mathrm{vol}(V_1)}.\]
\end{definition}
Observe that $h^\sigma=0$ if and only if $(G,\sigma)$ has a balanced connected component. Indeed, it is shown in Atay-Liu \cite{AL} that 
\begin{equation}\label{eq:ALCheeger}
    \frac{(h^\sigma)^2}{2}\leq \lambda_1^\sigma\leq 2h^\sigma. 
\end{equation}
Multi-way signed Cheeger constants and the corresponding higher order Cheeger inequalities have been studied in \cite{AL,LLPP}.
\begin{remark}
For the particular case that $\sigma\equiv -1$, or, in general, the negation $(G,-\sigma)$ is balanced, we have \cite[Section 3.3]{AL}
\begin{equation}\label{eq:hbeta}
    h^\sigma_{|\sigma\equiv -1}=\beta.
\end{equation}
Moreover, we have 
\begin{equation}\label{lambdasigma_lambdaN}
    {\lambda^{\sigma}_1}_{|\sigma\equiv -1}=2-\lambda_N
\end{equation} since $\Delta^\sigma_{|\sigma\equiv -1}=2I-\Delta$. Therefore, the dual Cheeger inequality (\ref{eq:TBJ}) turns out to be a special case of (\ref{eq:ALCheeger}).
\end{remark}
\begin{remark}\label{rmk:curvature}
In \cite{LMP}, the Bakry-\'Emery $\Gamma$-calculus has been developed on signed graphs. For signed graphs with nonnegative curvature in the sense of curvature dimension inequality $CD^\sigma(0,\infty)$, it was shown for a $d$-regular signed graph that
\begin{equation*}
    \lambda_1^\sigma\leq 16(\log 2)d(h^\sigma)^2.
\end{equation*}
That is, we have $\lambda_1^\sigma\approx (h^\sigma)^2$ up to a factor depending only on $d$.
In the case that $\sigma\equiv -1$, and $CD^\sigma(0,\infty)_{|\sigma\equiv -1}$ holds, we have $2-\lambda_N\approx \beta^2$ up to a factor depending only on $d$. Unaware of the work \cite{LMP} (private communications with P. Ralli), it has been listed as an open question in \cite{MRT} asking  whether there is 
a definition of discrete curvature that permits the characterization of a class of
graphs for which $2-\lambda_N\approx \beta^2$. The work in \cite{LMP} is an extension of Klartag-Kosma-Ralli-Tetali \cite{KKRT}, which demonstrates that if a graph has nonnegative curvature in the sense of the classical Bakry-\'Emery curvature dimension inequality $CD(0,\infty)$, then $\lambda_2$ equals the square of the Cheeger constant $h$ up to a factor depending only on $d$. A particular class of graphs satisfying $CD(0,\infty)$ is abelian Cayley graphs.

Indeed, on a triangle-free graph $(G,\sigma)$ with $\sigma\equiv -1$, we have $CD^\sigma(0,\infty)_{|\sigma\equiv -1}$ holds if and only if $CD(0,\infty)$ holds \cite[Proposition 3.6]{LMP}. This is due to the switching invariance of $CD^\sigma(0,\infty)$. In particular, for triangle-free finite abelian Cayley graphs, we have both $CD(0,\infty)$ and $CD^\sigma(0,\infty)_{|\sigma\equiv -1}$, and hence $\lambda_2\approx h^2$ and $2-\lambda_N\approx \beta^2$. For methods of calculating the curvature dimension inequalities explicitly on signed graphs, see the recent work \cite{HL}.
\end{remark}

\section{New spectral and isoperimetric constants on signed graphs}\label{new_concept}
In this section, we introduce two new concepts on signed graphs. One is a signed version of the vertex isoperimetric constant. The other one is a signed version of the Poincar\'e-type functional constant \`{a} la Bobkov, Houdr\'e and Tetali. 

Let $(G,\sigma)$ be a signed graph and $\pi: V\to\mathbb{R}_{>0}$ be an associated vertex measure. 
Inspired by (\ref{eq:betaout}) and (\ref{eq:edgeiota}), we  introduce the $\infty$-frustration index on a subset $V_1\subseteq V$ as follows:
\begin{align*}
\iota^{\sigma,\pi}_{\infty}(V_{1})&=\min_{\tau:V_{1}\rightarrow \{+1,-1\}}\frac{1}{2}\sum_{x\in V_{1}}\sup_{y:y\in V_{1},y\sim x }|\tau(x)-\sigma_{xy}\tau(y)|\pi(x).
\end{align*}
It is direct to see that $\iota^{\sigma,\pi}_{\infty}(V_1)=0$ if and only if $\iota^\sigma(V_1)=0$. In fact, the $p$-frustration index with $p\in [1,\infty)$
\[\iota^\sigma_p(V_1)=\min_{\tau:V_{1}\rightarrow \{+1,-1\}}\frac{1}{2}\sum_{x\in V_{1}}\sum_{y:y\in V_{1},y\sim x}|\tau(x)-\sigma_{xy}\tau(y)|^p\]
has been studied in \cite{BGKLM} (see also \cite{BSS}), and we have $\iota^\sigma(V_1)=\iota^\sigma_p(V_1)$ for $p=1$.

Next, we introduce the concept of vertex isoperimetric constants for signed graphs.
\begin{definition}\label{def:beta}
For a signed graph $(G,\sigma)$ with a vertex measure $\pi$, we define the \emph{outer vertex isoperimetric constant} $h_{out}^\sigma$ as 
\begin{equation*}
    h^{\sigma}_{out}:=\min_{\emptyset\neq V_{1}\subseteq V}\frac{2\iota^{\sigma,\pi}_{\infty}(V_{1})+\pi(\partial_{out}(V_{1}))}{\pi(V_{1})},
\end{equation*}
and the \emph{symmetric vertex isoperimetic constant} as 
\begin{equation*}
    h_S^{\sigma}:=\min_{\emptyset\neq V_{1}\subseteq V}\frac{2\iota^{\sigma,\pi}_{\infty}(V_{1}^{\circ})+\pi(\partial_{S}(V_{1}))}{\pi(V_{1})},
\end{equation*}
where $V_{1}^{\circ}=V_1\setminus \partial_{in}(V_1)$ is the inner vertex subset of $V_{1}$.
\end{definition}
\begin{remark}\label{rmk:hsigmaouth}
    It is direct to check for a $d$-regular graph associated with a counting measure $\pi$, i.e., $\pi\equiv 1$, that
    \[\frac{1}{2d}h_{out}^{\sigma}\leq h^\sigma\leq h_{out}^\sigma.\]
\end{remark}
\begin{proposition}\label{prop:h=beta}
    On a signed graph $(G,\sigma)$ with $\sigma\equiv -1$ and $\pi$ being the counting measure, we have
    \begin{equation}\label{eq:houtbetaout}
        h_{out}^\sigma\geq \beta_{out}.
    \end{equation}
\end{proposition}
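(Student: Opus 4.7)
The plan is to compute $\iota^{\sigma,\pi}_\infty(V_1)$ explicitly when $\sigma\equiv -1$ and $\pi\equiv 1$, and observe that every choice of the sign function $\tau$ appearing in its definition supplies a valid partition in the definition of $\beta_{out}$. The only real work is tracking the factor of $2$ that sits in front of $\iota^{\sigma,\pi}_\infty$ in the definition of $h_{out}^\sigma$.

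First I would fix a nonempty $V_1\subseteq V$ and any $\tau:V_1\to\{+1,-1\}$. Because $\sigma_{xy}=-1$ for every edge,
\[
|\tau(x)-\sigma_{xy}\tau(y)|=|\tau(x)+\tau(y)|=2\cdot \mathbbm{1}[\tau(x)=\tau(y)],
\]
so with the convention that a supremum over the empty set is $0$, the inner supremum
$\sup_{y\in V_1,\,y\sim x}|\tau(x)-\sigma_{xy}\tau(y)|$ equals $2$ precisely when $x$ has a neighbour in $V_1$ carrying the same $\tau$-value, and $0$ otherwise. Letting $L_\tau:=\tau^{-1}(+1)$ and $R_\tau:=\tau^{-1}(-1)$, which form a disjoint partition of $V_1$, the displayed count over $x\in V_1$ equals exactly $I(L_\tau)+I(R_\tau)$ in the notation used for $\beta_{out}$. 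Minimising over $\tau$ therefore yields
\[
\iota^{\sigma,\pi}_\infty(V_1)\;=\;\min_{L\sqcup R=V_1}\bigl(I(L)+I(R)\bigr).
\]

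Next, let $(L^\ast,R^\ast)$ achieve this minimum. Since $I(L^\ast),I(R^\ast)\ge 0$, the crucial numerical inequality is the trivial bound $2(I(L^\ast)+I(R^\ast))\ge I(L^\ast)+I(R^\ast)$. Combined with $V_1=L^\ast\cup R^\ast$, this gives
\[
\frac{2\iota^{\sigma,\pi}_\infty(V_1)+|\partial_{out}(V_1)|}{|V_1|}\;\ge\;\frac{I(L^\ast)+I(R^\ast)+|\partial_{out}(L^\ast\cup R^\ast)|}{|L^\ast\cup R^\ast|}\;\ge\;\beta_{out},
\]
the second inequality being the fact that $(L^\ast,R^\ast)$ is a legal competitor in the definition of $\beta_{out}$. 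Taking the minimum over nonempty $V_1\subseteq V$ then delivers \eqref{eq:houtbetaout}.

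There is essentially no obstacle in this argument; the main thing to be careful about is the convention for the sup over an empty neighbour-set (which must be $0$ for the correspondence with $I(\cdot)$ to work, and which is consistent with the intuition that a vertex with no neighbours in its own part should not be penalised). It is also worth noting that the lossy step is exactly the absorption of the extra factor $2$ in front of $\iota^{\sigma,\pi}_\infty(V_1)$, so the bound is tight whenever one can choose $V_1$ whose induced subgraph admits a partition into two independent sets, i.e., whenever the minimiser satisfies $I(L^\ast)+I(R^\ast)=0$.
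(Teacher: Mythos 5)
Your proof is correct and follows essentially the same route as the paper: both identify $\iota^{\sigma,\pi}_\infty(V_1)$ with $\min_{L\sqcup R = V_1}(I(L)+I(R))$ via the switching-function/partition correspondence, and then absorb the extra factor of $2$ to bound $h_{out}^\sigma$ below by $\beta_{out}$. Your write-up is slightly more explicit about the empty-supremum convention and the cancellation of the $1/2$ prefactor against the value $2$ of $|\tau(x)+\tau(y)|$, but the argument is the same.
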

\begin{proof}
   In the case of $\sigma\equiv -1$ and $\pi$ being the counting measure, we check for any subset $V_1\subseteq V$ that
  \begin{align*}
      \iota^\sigma_{\infty}(V_1)=\min_{L\cap R=\emptyset, L\cup R=V_1}(I(L)+I(R))
  \end{align*}
  by observing the following correspondence between a switching function $\tau:V_1\to \{+1,-1\}$ and a partition $L,R$ of $V_1$: $L=\{x\in V_1:\tau(x)=+1\}$ and $R=\{x\in V_1: \tau(x)=-1\}$. 
  Therefore, we derive
\begin{align}
h^{\sigma}_{out}&\geq\min_{\emptyset\neq V_{1}\subseteq V}\frac{\iota^{\sigma,\pi}_{\infty}(V_{1})+\pi(\partial_{out}(V_{1}))}{\pi(V_{1})}\notag
\\&=\min_{\emptyset\neq V_1\subseteq V}\frac{\min_{L\cap R=\emptyset,L\cup R=V_1}(I(L)+I(R))+|\partial_{out}(V_1)|}{|V_1|}=\beta_{out}.\notag
\end{align}
\end{proof}

We define the following Poincar\'e-type functional constant \`{a} la Bobkov, Houdr\'e and Tetali on signed graphs. 
\begin{definition}\label{def:BHT}
For a signed graph $(G,\sigma)$ with an associated vertex measure $\pi$, we define $\lambda_{\infty}^{\sigma}$ as the optimal constant in the following Poincar\'e-type inequality 
\begin{equation}\label{eq:BHTsigma}\lambda_{\infty}^{\sigma}\sum_{x\in V}f(x)^{2}\pi(x)\leq\sum_{x\in V}\sup_{y:y\sim x}|f(x)-\sigma_{xy}f(y)|^{2}\pi(x).\end{equation}
where $f:V\rightarrow \mathbb{R}$ is arbitrary.
\end{definition}
It is direct to check that $\lambda_{\infty}^\sigma$ is switching invariant.
In the case of $\sigma\equiv +1$, or, in general, $(G,\sigma)$ is balanced, the constant $\lambda^{\sigma}_{\infty}$ is exactly the constant $\lambda_{\infty}$ introduced by Bobkov, Houdr\'e and Tetali in \cite[Section 2]{BHT}.
\begin{lemma}\label{lem:lambdainfty} For a $d$-regular signed graph $(G,\sigma)$ with the vertex measure $\pi$ being the counting measure, we have 
\[2\lambda_1^\sigma\leq \lambda_{\infty}^\sigma\leq 2d\lambda_1^\sigma.\]
\end{lemma}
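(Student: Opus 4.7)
My plan is to derive both inequalities directly from the variational characterizations of $\lambda_1^\sigma$ and $\lambda_\infty^\sigma$, using nothing more than the elementary pointwise comparisons
\[\sup_{y:y\sim x}a_y^2 \;\leq\; \sum_{y:y\sim x}a_y^2 \;\leq\; d\sup_{y:y\sim x}a_y^2\]
valid in a $d$-regular graph. The standard symmetrization of the Dirichlet form of $\Delta^\sigma$ gives
\[\lambda_1^\sigma = \min_{f\not\equiv 0}\frac{\tfrac{1}{2}\sum_{x\in V}\sum_{y:y\sim x}(f(x)-\sigma_{xy}f(y))^2}{d\sum_{x\in V}f(x)^2},\]
while the definition \rb{eq:BHTsigma} of $\lambda_\infty^\sigma$ specialized to the counting measure is
\[\lambda_\infty^\sigma = \min_{f\not\equiv 0}\frac{\sum_{x\in V}\sup_{y:y\sim x}(f(x)-\sigma_{xy}f(y))^2}{\sum_{x\in V}f(x)^2}.\]

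For the upper bound $\lambda_\infty^\sigma \leq 2d\lambda_1^\sigma$ I would plug a $\lambda_1^\sigma$-eigenfunction $f$ of $\Delta^\sigma$ into the Rayleigh quotient defining $\lambda_\infty^\sigma$. The pointwise inequality $\sup_{y\sim x}(f(x)-\sigma_{xy}f(y))^2 \leq \sum_{y\sim x}(f(x)-\sigma_{xy}f(y))^2$ together with the eigenfunction identity $\sum_x\sum_{y\sim x}(f(x)-\sigma_{xy}f(y))^2 = 2d\lambda_1^\sigma \sum_x f(x)^2$ immediately yields the desired bound after dividing by $\sum_x f(x)^2$.

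For the lower bound $2\lambda_1^\sigma \leq \lambda_\infty^\sigma$ I would use the opposite comparison: for any nonzero $f$, bounding each degree-$d$ inner sum by $d$ times the corresponding supremum gives
\[\tfrac{1}{2}\sum_{x\in V}\sum_{y:y\sim x}(f(x)-\sigma_{xy}f(y))^2 \;\leq\; \tfrac{d}{2}\sum_{x\in V}\sup_{y:y\sim x}(f(x)-\sigma_{xy}f(y))^2.\]
Dividing by $d\sum_x f(x)^2$ shows that the Rayleigh ratio defining $\lambda_1^\sigma$ is at most $\tfrac{1}{2}$ times the Rayleigh ratio defining $\lambda_\infty^\sigma$ evaluated at the same $f$; taking the infimum over $f$ concludes.

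There is no genuine obstacle here: the entire argument reduces to the sandwich $\sup \leq \sum \leq d\cdot\sup$ over the $d$ neighbors of each vertex, which is precisely why the two sides of the lemma differ by a factor $d$. The signature $\sigma$ plays no role beyond appearing in the increments $f(x)-\sigma_{xy}f(y)$, which are treated identically on both sides, so the argument is a direct transcription of the unsigned case of Bobkov--Houdr\'e--Tetali \cite{BHT}.
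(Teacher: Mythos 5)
Your proposal is correct and matches the paper's proof exactly: the paper likewise writes down the two Rayleigh-quotient characterizations of $\lambda_1^\sigma$ (after symmetrizing the Dirichlet form) and $\lambda_\infty^\sigma$, and the "follows directly" step you've made explicit is precisely the pointwise sandwich $\sup_{y\sim x}\le\sum_{y\sim x}\le d\cdot\sup_{y\sim x}$.
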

\begin{proof}
By definition, we have 
$$\lambda_{\infty}^{\sigma}=\inf_{\substack{f:V\rightarrow\mathbb{R}\\ f\not\equiv 0}}\frac{\sum_{x\in V}\sup_{y:y\sim x}|f(x)-\sigma_{xy}f(y)|^{2}\pi(x)}{\sum_{x\in V}f(x)^{2}\pi(x)}.$$
On the other hand, for a $d$-regular signed graph $(G,\sigma)$, we have
\begin{align*}
\lambda_1^\sigma=\inf_{\substack{f:V\rightarrow\mathbb{R}\\ f\not\equiv 0}}\frac{\sum_{x\in V}f(x)\Delta^\sigma f(x)d_x}{\sum_{x\in V}f(x)^2d_x}=\inf_{\substack{f:V\rightarrow\mathbb{R}\\ f\not\equiv 0}}\frac{\frac{1}{2}\sum_{x\in V}\sum_{y:y\sim x}(f(x)-\sigma_{xy}f(y))^2}{d\sum_{x\in V}(f(x))^2}.
\end{align*}
When $\pi$ is the counting measure, $2\lambda_1^\sigma\leq\lambda_{\infty}^\sigma\leq 2d\lambda_1^\sigma$ follows directly.
\end{proof}

\section{Cheeger-type inequalities on signed graphs}\label{signed_graph}
In this section, we present our main estimates relating the constants  $\lambda^{\sigma}_{\infty}$, $h^{\sigma}_{out}$ and $h_S^{\sigma}$ of signed graphs.
\begin{theorem}\label{thm:main}
For a signed graph $(G,\sigma)$ with a vertex measure $\pi$, we have 
\begin{align}\label{ineq:beta}
\lambda^{\sigma}_{\infty}\geq \left(\sqrt{1+h_{out}^\sigma}-1\right)^2,
\end{align}
and
\begin{align}\label{ineq:eta}
\lambda^{\sigma}_{\infty}\geq \left(\sqrt{1+h_{S}^\sigma}-1\right)^2.
\end{align}
\end{theorem}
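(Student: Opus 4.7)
The plan is to mimic the co-area proof of the unsigned Bobkov--Houdr\'e--Tetali inequality \eqref{eq:BHT}, combined with the level-set switching trick used in the proofs of signed Cheeger inequalities in \cite{AL,LLPP}. Fix a non-zero $f:V\to\IR$ and set $F:=|f|$, $\tau:=\sgn f$, $g(x):=\sup_{y\sim x}|f(x)-\sigma_{xy}f(y)|$, $A':=\sum_x g(x)^2\pi(x)$, and $B:=\sum_x f(x)^2\pi(x)$; the target estimate is then $A'/B\ge\bigl(\sqrt{1+h_{out}^\sigma}-1\bigr)^2$, since \eqref{ineq:eta} will follow from the same argument with $\partial_{out}$ replaced by $\partial_S$ and the inner interior $V_t^\circ:=V_t\setminus\partial_{in}(V_t)$ used in place of $V_t$ in the frustration term.

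First, apply the defining inequality of $h_{out}^\sigma$ to the level sets $V_t:=\{F^2>t\}$ and integrate over $t\in[0,\infty)$, using $\int_0^\infty\pi(V_t)\,dt=B$, to obtain
\[
h_{out}^\sigma B\le 2\int_0^\infty\iota_{\infty}^{\sigma,\pi}(V_t)\,dt+\int_0^\infty\pi(\partial_{out}(V_t))\,dt.
\]
For the outer-boundary integral, use the classical co-area identity $\int_0^\infty\pi(\partial_{out}(V_t))\,dt=\sum_x\pi(x)(M(x)^2-F(x)^2)_+$ with $M(x):=\sup_{y\sim x}F(y)$, together with the pointwise bounds $(M-F)_+\le g$ and $M+F\le 2F+g$ and Cauchy--Schwarz, to produce an estimate of the shape $2\sqrt{A'B}+A'$. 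For the frustration integral, feed the natural switching $\tau=\sgn f$ into the definition of $\iota_{\infty}^{\sigma,\pi}$ and set $\beta(x):=\sup\{F(y):y\sim x,\ \sigma_{xy}\tau(x)\tau(y)=-1\}$ (with $\sup\emptyset:=0$); direct estimation then gives
\[
\iota_{\infty}^{\sigma,\pi}(V_t)\le\sum_x\pi(x)\mathbf{1}_{\{F(x)^2>t,\ \beta(x)^2>t\}},
\]
whose integral in $t$ equals $\sum_x\pi(x)\min(F(x),\beta(x))^2$. The key pointwise fact $g(x)\ge F(x)+\beta(x)$, arising because each bad neighbor $y$ contributes $|f(x)-\sigma_{xy}f(y)|=F(x)+F(y)$ to $g(x)$, together with the AM--GM bound $(F+\beta)^2\ge 4\min(F,\beta)^2$, absorbs the frustration contribution into a controlled multiple of $A'$.

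Combining the two estimates produces an inequality of the form $h_{out}^\sigma B\le A'+2\sqrt{A'B}$; writing $r:=\sqrt{A'/B}$ and solving the quadratic $r^2+2r\ge h_{out}^\sigma$ then yields $r\ge\sqrt{1+h_{out}^\sigma}-1$, which is precisely \eqref{ineq:beta}. The main technical obstacle will lie in the precise constant balancing at this last step: a naive pair of independent Cauchy--Schwarz bounds on the two integrals only delivers a quadratic of the form $r^2+cr\ge h_{out}^\sigma$ with some $c>2$, which is strictly weaker than the claim. Reaching exactly the factor $\bigl(\sqrt{1+h_{out}^\sigma}-1\bigr)^2$ will most plausibly require a unified pointwise estimate such as $2\min(F,\beta)^2+(M(x)^2-F(x)^2)_+\le g(2F+g)$, or else a careful case split of the vertex set into those $x$ with and without frustrated neighbors, so that the frustration and boundary contributions merge into a single Cauchy--Schwarz application. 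This parallels the way \cite{AL,LLPP} simultaneously handle the frustration and edge-boundary contributions in a single co-area estimate in the proof of the signed Cheeger inequality.
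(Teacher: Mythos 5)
Your co-area skeleton matches the paper's: level sets $V_t$, the natural switching $\tau=\sgn f$, the identity $\int\pi(V_t)\,dt=\sum_x f(x)^2\pi(x)$, and the integrand lower bound $2\iota_\infty^{\sigma,\pi}(V_t)+\pi(\partial_{out}(V_t))\geq h_{out}^\sigma\,\pi(V_t)$ are all exactly as in the paper. Where you diverge — and, as you yourself anticipate, where the exact constant is lost — is in bounding the numerator from above. Your two independent Cauchy--Schwarz bounds yield $h_{out}^\sigma B\leq\tfrac32 A'+2\sqrt{A'B}$, i.e.\ $\tfrac32 r^2+2r\geq h_{out}^\sigma$ with $r=\sqrt{A'/B}$, which is strictly weaker than the required $r^2+2r\geq h_{out}^\sigma$ (at $h_{out}^\sigma=2$ you get $r\geq 2/3$ but need $r\geq\sqrt3-1$). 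Moreover, the unified pointwise estimate you propose as a repair, $2\min(F,\beta)^2+(M^2-F^2)_+\leq g(2F+g)$, is false: take $F(x)=\tfrac13$, an unfrustrated neighbor $y_1$ with $F(y_1)=1$ and a frustrated neighbor $y_2$ with $F(y_2)=\tfrac16$; then $g=\max(\tfrac23,\tfrac12)=\tfrac23$, the right side is $\tfrac23\cdot\tfrac43=\tfrac89$, while the left side is $2\cdot\tfrac1{36}+(1-\tfrac19)=\tfrac{17}{18}>\tfrac89$.

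The missing ingredient is that the paper never splits the numerator into a frustration integral plus a boundary integral. Instead it introduces a \emph{single} intermediate quantity
\[
\int_0^1\sum_{x\in V}\sup_{y\sim x}\bigl|Y_{\sqrt t}(f(x))-\sigma_{xy}Y_{\sqrt t}(f(y))\bigr|\pi(x)\,dt,
\]
built from the truncated sign function $Y_{\sqrt t}$ of Lemma~\ref{lemma:ineq c} (from \cite[Lemma 4.1]{AL}). Since $Y_{\sqrt t}(f)$ restricted to the level set is a switching function, the inner supremum \emph{simultaneously} registers the frustration contribution (value $2$, from frustrated $y\in V_t$ when $x\in V_t$) and the outer-boundary contribution (value $1$, from $x\notin V_t$ with a neighbor in $V_t$), giving the lower bound $\geq 2\iota_\infty^{\sigma,\pi}(V_t)+\pi(\partial_{out}(V_t))$ in one stroke rather than as a sum of two separate estimates. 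On the upper side the paper applies Lemma~\ref{lemma:ineq c}, namely $\int_0^1|Y_{\sqrt t}(z_1)-Y_{\sqrt t}(z_2)|\,dt\leq|z_1-z_2|(|z_1|+|z_2|)$, to this quantity and then performs a \emph{single} Cauchy--Schwarz, arriving at $A^2+2AB$ with the clean coefficient in front of $A^2$. That lemma — which is exactly the analytic content behind the signed Cheeger inequality in \cite{AL} — is the device you need but did not identify; absent it, your plan reaches only a weaker quadratic and the pointwise inequality you propose to close the gap does not hold.
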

For the proof, the following lemma taken from \cite[Lemma 4.1]{AL} (see in particular \cite[(4.2)]{AL}) plays an important role.
\begin{lemma}\label{lemma:ineq c}
For any $t\in [0,1]$, we denote by $Y_{t}:[-1,1]\rightarrow\{+1,-1,0\}$ the indicator function such that for any $z\in [-1,1]$ 
$$Y_{t}(z):=\left\{
\begin{aligned}
\frac{z}{|z|},\textrm{ if }|z|\geq t,\\
0,\textrm{ if }|z|< t,
\end{aligned}
\right.$$
where we use the notation that $z/|z|=1$ when $z=0$. Then, we have for any $z_1,z_2\in [-1,1]$ the following inequality
\[\int_{0}^{1}\left|Y_{\sqrt{t}}(z_{1})-Y_{\sqrt{t}}(z_{2})\right|dt\leq |z_{1}-z_{2}|(|z_{1}|+|z_{2}|).\]
\end{lemma}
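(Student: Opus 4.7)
The plan is to evaluate the left-hand integral explicitly by a short case analysis on the signs and magnitudes of $z_1,z_2$, and then to verify the inequality in each case. The key observation is that, for any fixed $z\in[-1,1]\setminus\{0\}$, the function $u\mapsto Y_u(z)$ is a single-step function that jumps from $\sgn(z)$ to $0$ at the threshold $u=|z|$. Hence $|Y_u(z_1)-Y_u(z_2)|$ will be piecewise constant in $u$ once the two thresholds $|z_1|$ and $|z_2|$ are identified.

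The first step is to substitute $u=\sqrt{t}$, giving $dt=2u\,du$ and rewriting the integral as $\int_0^1 |Y_u(z_1)-Y_u(z_2)|\cdot 2u\,du$. Assuming without loss of generality that $|z_1|\le|z_2|$, and ignoring the measure-zero set where $z_i=0$, the integrand is constant on each of the sub-intervals $[0,|z_1|]$, $[|z_1|,|z_2|]$, $[|z_2|,1]$. The respective values are $0,1,0$ when $z_1,z_2$ share a sign, and $2,1,0$ when they have opposite signs. A direct evaluation then yields
\[
\int_0^1 |Y_u(z_1)-Y_u(z_2)|\cdot 2u\,du=\begin{cases}|z_2|^2-|z_1|^2,& \text{if $z_1,z_2$ have the same sign},\\ |z_1|^2+|z_2|^2,& \text{if $z_1,z_2$ have opposite signs}.\end{cases}
\]

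Next I would compare these with the right-hand side $|z_1-z_2|(|z_1|+|z_2|)$. In the same-sign case one has $|z_1-z_2|=|z_2|-|z_1|$ under $|z_1|\le|z_2|$, so the right-hand side equals $|z_2|^2-|z_1|^2$ and the bound holds with equality. In the opposite-sign case $|z_1-z_2|=|z_1|+|z_2|$, so the right-hand side equals $(|z_1|+|z_2|)^2=|z_1|^2+|z_2|^2+2|z_1||z_2|$, which exceeds the left-hand side by $2|z_1||z_2|\ge 0$. The main obstacle is essentially bookkeeping: correctly tracking the value of the integrand on each of the three sub-intervals in both sign configurations, and verifying that the convention $z/|z|=1$ at $z=0$ does not cause trouble (which it does not, since it only alters the integrand on a measure-zero set of $u$).
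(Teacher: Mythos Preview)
Your proof is correct. The paper itself does not supply a proof of this lemma but cites it from \cite[Lemma 4.1]{AL}; your direct case analysis---splitting the integral at the thresholds $|z_1|$ and $|z_2|$ after the substitution $u=\sqrt{t}$ and then distinguishing the same-sign and opposite-sign cases---is exactly the standard computation used for this estimate (compare also the analogous calculation sketched for Lemma~\ref{lem:BSS}).
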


\begin{proof}[Proof of Theorem \ref{thm:main}]

For the derivation of $\lambda_{\infty}^{\sigma}$, it suffices to verify that (\ref{eq:BHTsigma}) for all functions $f:V\rightarrow\mathbb{R}$ satisfying $\max_{x\in V}|f(x)|=1$.


Let $f$ be such a function. For any given $t\in [0,1]$, we define the subset $V^f(\sqrt{t})$ of $V$ with respect to $f$ and $t$ as follows:
$$V^{f}(\sqrt{t})=\left\{x\in V\left|\,|f(x)|\geq\sqrt{t}\right.\right\}.$$
Let $Y_t:[-1,1]\to \{+1,-1,0\}$ be defined as in Lemma \ref{lemma:ineq c}. When restricting to the subset $V^f(\sqrt{t})$, the function $Y_{\sqrt{t}}(f)$ can be considered as a switching function.

Next, we consider the following crucial quantity
\begin{equation}\label{eq:keyquantity}
    \frac{\int_0^1\sum_{x\in V}\sup_{y:y\sim x}\left|Y_{\sqrt{t}}(f(x))-\sigma_{xy}Y_{\sqrt{t}}(f(y))\right|\pi(x)dt}{\int_0^1\pi\left(V^f(\sqrt{t})\right)dt}.
\end{equation}
For the denominator of the ratio above, we compute
\begin{align}\label{eq:denominator}
\int_{0}^{1}\pi\left(V^{f}(\sqrt{t})\right)dt=&\int_{0}^{1}\sum_{x\in V^f(\sqrt{t})}\pi(x)dt=\sum_{x\in V}\int_{0}^{f(x)^2}1\,dt\pi(x)=\sum_{x\in V}f(x)^2\pi(x).
\end{align}
For the numerator of the ratio, we derive by Lemma \ref{lemma:ineq c} that 
\begin{align}\label{ineq:sup}
&\int_{0}^{1} \sum_{x\in V} \sup_{y:y\sim x}\left|Y_{\sqrt{t}}(f(x))-\sigma_{xy}Y_{\sqrt{t}}(f(y))\right|\pi(x)dt\nonumber\\
=&\int_{0}^{1} \sum_{x\in V} \sup_{y:y\sim x}\left|Y_{\sqrt{t}}(f(x))-Y_{\sqrt{t}}(\sigma_{xy}f(y))\right|\pi(x)dt\nonumber\\
\leq&\sum_{x\in V}\sup_{y:y\sim x}|f(x)-\sigma_{xy}f(y)|(|f(x)|+|f(y)|)\pi(x).
\end{align}

On the other hand, we estimate the numerator of the ratio from below as follows.
For each edge $x\sim y$, the term $Y_{xy}:=\left|Y_{\sqrt{t}}(f(x))-\sigma_{xy}Y_{\sqrt{t}}(f(y))\right|$ can take the following values
\begin{itemize}
  \item $x\in V^{f}(\sqrt{t})$ and $y\in V^{f}(\sqrt{t})$, then $Y_{xy}=2$ or $0$;
  \item $x\in V^{f}(\sqrt{t})$ and $y\in V\setminus V^{f}(\sqrt{t})$, then $Y_{xy}=1$;
  \item $x\in V\setminus V^{f}(\sqrt{t})$ and $y\in V^{f}(\sqrt{t})$, then $Y_{xy}=1$;
  \item $x\in V\setminus V^{f}(\sqrt{t})$ and $y\in V\setminus V^{f}(\sqrt{t})$, then $Y_{xy}=0$.
\end{itemize}
Then we estimate 
\begin{align}
&\sum_{x\in V}\sup_{y:y\sim x}\left|Y_{\sqrt{t}}(f(x))-\sigma_{xy}Y_{\sqrt{t}}(f(y))\right|\pi(x)\notag\\
=&\sum_{x\in V^f(\sqrt{t})}\sup_{y:y\sim x}\left|Y_{\sqrt{t}}(f(x))-\sigma_{xy}Y_{\sqrt{t}}(f(y))\right|\pi(x)\notag\\
&+\sum_{x\in V\setminus V^f(\sqrt{t})}\sup_{y:y\sim x}\left|Y_{\sqrt{t}}(f(x))-\sigma_{xy}Y_{\sqrt{t}}(f(y))\right|\pi(x)\notag\\
\geq&\sum_{x\in V^f(\sqrt{t})}\sup_{\substack{y:y\sim x\\y\in V^f(\sqrt{t})}}\left|Y_{\sqrt{t}}(f(x))-\sigma_{xy}Y_{\sqrt{t}}(f(y))\right|\pi(x)+\pi\left(\partial_{out}(V^f(\sqrt{t}))\right)\notag\\
\geq &2\iota^{\sigma,\pi}_{\infty}\left(V^{f}(\sqrt{t})\right)+\pi\left(\partial_{out}(V^{f}(\sqrt{t}))\right).\label{eq:lowerboundhout}
\end{align}

Alternatively, we estimate as follows

\begin{align}
&\sum_{x\in V}\sup_{y:y\sim x}\left|Y_{\sqrt{t}}(f(x))-\sigma_{xy}Y_{\sqrt{t}}(f(y))\right|\pi(x)\notag\\
=&\sum_{x\in V^f(\sqrt{t})^\circ}\sup_{y:y\sim x}\left|Y_{\sqrt{t}}(f(x))-\sigma_{xy}Y_{\sqrt{t}}(f(y))\right|\pi(x)\notag\\
&+\sum_{x\in V\setminus V^f(\sqrt{t})^\circ}\sup_{y:y\sim x}\left|Y_{\sqrt{t}}(f(x))-\sigma_{xy}Y_{\sqrt{t}}(f(y))\right|\pi(x)\notag\\
\geq&\sum_{x\in V^f(\sqrt{t})^\circ}\sup_{\substack{y:y\sim x\\y\in V^f(\sqrt{t})^\circ}}\left|Y_{\sqrt{t}}(f(x))-\sigma_{xy}Y_{\sqrt{t}}(f(y))\right|\pi(x)+\pi\left(\partial_{S}(V^f(\sqrt{t}))\right)\notag\\
\geq &2\iota^{\sigma,\pi}_{\infty}\left(V^{f}(\sqrt{t})^\circ\right)+\pi\left(\partial_{S}(V^{f}(\sqrt{t}))\right).\label{eq:lowerboundhS}
\end{align}

Combining (\ref{eq:denominator}), (\ref{ineq:sup}) and (\ref{eq:lowerboundhout}), we arrive at 
\begin{align}
&\frac{\int_{0}^{1}2\iota^{\sigma,\pi}_{\infty}\left(V^{f}(\sqrt{t})\right)+\pi\left(\partial_{out}(V^{f}(\sqrt{t}))\right)dt}{\int_{0}^{1}\pi\left(V^{f}(\sqrt{t})\right)dt}\notag\\
\leq& \frac{\sum_{x\in V}\sup_{y:y\sim x}|f(x)-\sigma_{xy}f(y)|(|f(x)|+|f(y)|)\pi(x)}{\sum_{x\in V}f(x)^2\pi(x)}.\label{eq:step1}
\end{align}

For each summand in the numerator of the ratio above, we estimate
\begin{align*}
&\sup_{y:y\sim x}|f(x)-\sigma_{xy}f(y)|(|f(x)|+|f(y)|)\pi(x)\\
=&\sup_{y:y\sim x}|f(x)-\sigma_{xy}f(y)|(|f(y)|-|f(x)|)\pi(x)+\sup_{y:y\sim x}|f(x)-\sigma_{xy}f(y)|(2|f(x)|)\pi(x)\\
\leq&\sup_{y:y\sim x}|f(x)-\sigma_{xy}f(y)|^{2}\pi(x)+2\sup_{y:y\sim x}|f(x)-\sigma_{xy}f(y)||f(x)|\pi(x).
\end{align*}
Then, we obtain by Cauchy-Schwarz inequality that
\begin{align}\label{ineq:Cauchy-Schwarts}
&\sum_{x\in V}\sup_{y:y\sim x}|f(x)-\sigma_{xy}f(y)|(|f(x)|+|f(y)|)\pi(x)\nonumber\\
\leq&\sum_{x\in V}\sup_{y:y\sim x}|f(x)-\sigma_{xy}f(y)|^{2}\pi(x)+2\sum_{x\in V}\sup_{y:y\sim x}|f(x)-\sigma_{xy}f(y)||f(x)|\pi(x)\nonumber\\
\leq&\sum_{x\in V}\sup_{y:y\sim x}|f(x)-\sigma_{xy}f(y)|^{2}\pi(x)\nonumber\\
&\hspace{2cm}+2\left(\sum_{x\in V}\sup_{y:y\sim x}|f(x)-\sigma_{xy}f(y)|^{2}\pi(x)\right)^{\frac{1}{2}}\left(\sum_{x\in V}f(x)^{2}\pi(x)\right)^{\frac{1}{2}}.
\end{align}
For simplicity, we denote by
$$A^2:=\sum_{x\in V}\sup_{y:y\sim x}|f(x)-\sigma_{xy}f(y)|^{2}\pi(x),\,\,\text{and}\,\,B^2:=\sum_{x\in V}f(x)^{2}\pi(x),$$
Inserting (\ref{ineq:Cauchy-Schwarts}) into (\ref{eq:step1}) gives
\begin{align*}
\frac{\int_{0}^{1}2\iota^{\sigma,\pi}_{\infty}\left(V^{f}(\sqrt{t})\right)+\pi\left(\partial_{out}(V^{f}(\sqrt{t}))\right)dt}{\int_{0}^{1}\pi\left(V^{f}(\sqrt{t})\right)dt}\leq \frac{A^{2}+2AB}{B^{2}}.
\end{align*}
Notice that there exists $t'\in [0,1]$ such that 
\begin{align*}
\frac{\int_{0}^{1}2\iota^{\sigma,\pi}_{\infty}\left(V^{f}(\sqrt{t})\right)+\pi\left(\partial_{out}(V^{f}(\sqrt{t}))\right)dt}{\int_{0}^{1}\pi\left(V^{f}(\sqrt{t})\right)dt}\geq \frac{2\iota^{\sigma,\pi}_{\infty}\left(V^{f}(\sqrt{t'})\right)+\pi\left(\partial_{out}(V^{f}(\sqrt{t'}))\right)}{\pi\left(V^{f}(\sqrt{t'})\right)}.
\end{align*}
Therefore, we have 
$$\frac{A^{2}+2AB}{B^{2}}\geq h^{\sigma}_{out}.$$
Solving it directly yields
$$\frac{A^{2}}{B^{2}}\geq\left(\sqrt{1+h^{\sigma}_{out}}-1\right)^{2}.$$
By definition we derive
$$\lambda^{\sigma}_{\infty}\geq \left(\sqrt{1+h^{\sigma}_{out}}-1\right)^{2}.$$

Along the same line, we derive by combining (\ref{eq:denominator}), (\ref{ineq:sup}) and (\ref{eq:lowerboundhS}) (instead of (\ref{eq:lowerboundhout})) that
$$\lambda^{\sigma}_{\infty}\geq \left(\sqrt{1+h^{\sigma}_{S}}-1\right)^{2}.$$
This completes the proof.
\end{proof}
\begin{remark}
    Observe that 
    \[h_{out}^\sigma\leq \frac{2\iota^{\sigma,\pi}_{\infty}(V)}{\pi(V)}\leq 2\,\,\text{and}\,\,h_{S}^\sigma\leq \frac{2\iota^{\sigma,\pi}_{\infty}(V)}{\pi(V)}\leq 2\]
    are both uniformly bounded. Hence our estimate tells that
    \[\lambda_{\infty}^\sigma\geq \frac{1}{12}(h_{out}^\sigma)^2\,\,\text{and}\,\,\lambda_{\infty}^\sigma\geq \frac{1}{12}(h_{S}^\sigma)^2.\]
\end{remark}
\begin{proof}[Proof of Theorem \ref{thm: Main2}]
The inequality in (\ref{eq:key}) follows from combining the inequality (\ref{ineq:beta})  and Lemma \ref{lem:lambdainfty}.
\end{proof}

\begin{remark}\label{rem:buser}
    Combining Lemma \ref{lem:lambdainfty}, (\ref{eq:ALCheeger}) and Remark \ref{rmk:hsigmaouth}, we have for a $d$-regular signed graph that
    \[\lambda^\sigma_{\infty}\leq 2d\lambda_1^\sigma\leq 4dh^\sigma \leq 4dh^\sigma_{out}.\]
    Recall from Remark \ref{rmk:curvature}, we have for a $d$-regular signed graphs with $CD^\sigma(0,\infty)$ \cite{LMP} that 
\[\lambda^\sigma_{\infty}\leq 2d\lambda_1^\sigma\leq 32(\log 2)(dh^\sigma)^2 \leq 32(\log 2)(dh_{out}^\sigma)^2.\]
In particular, Theorem \ref{thm: Main2} is tight up to a factor depending only on $d$. We wonder whether the order of $d$ in the estimate, $\lambda_1^\sigma\geq c(h_{out}^\sigma)^2/d$ for some constant $c$, is optimal or not.
\end{remark}
\section{Applications on non-bipartite finite Cayley graphs, Cayley sum graphs and vertex transitive graphs}\label{Cayley_graph}
Besides Theorem \ref{thm:main}, another key ingredient to prove  Theorem \ref{thm:Main1} is the following estimate of Moorman, Ralli and Tetali.
\begin{theorem}(Moorman, Ralli and Tetali \cite[Theorem 2.4]{MRT})\label{thm:MRThoutbetaout}
Let  $C(X,S)$ be a non-bipartite Calyley graph of a finite group $X$ with respect to a generating set $S$ satisfying $S^{-1}=S$ and $id_{X}\notin S$. Let $d=|S|$. Then we have
$$h_{out}\leq200\beta_{out}.$$
\end{theorem}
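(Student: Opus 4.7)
The plan is to start from an optimiser $(L,R)$ of $\beta_{out}$, set $T=L\cup R$ and $\delta=\beta_{out}$ (so the total ``defect'' $I(L)+I(R)+|\partial_{out}(T)|$ equals $\delta|T|$), and construct a subset $U\subseteq V$ with $|U|\le |V|/2$ satisfying $|\partial_{out}(U)|\le 200\,\beta_{out}|U|$. I may assume $|T|\le |V|/2$ after possibly passing to a complement; otherwise a parallel argument on $V\setminus T$ works. The constant $200$ will be a bookkeeping coefficient absorbing the factors of $2$ coming from swapping $L$/$R$, from the definition of $\beta_{out}$, and from a possible case split on the size of $T$.

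The first ingredient is the vertex-transitivity of $C(X,S)$: left multiplication by any $g\in X$ is a graph automorphism, so every translate $gT$ carries the same defect statistics, with partition $(gL,gR)$. The second ingredient is that translation by a single generator $s\in S$ \emph{almost swaps} $L$ and $R$: for each $x\in L$ the neighbour $sx$ lies in $R$ unless either $x$ is already counted in $I(L)$ or $sx\in\partial_{out}(T)$. Hence $|sL\setminus R|+|sR\setminus L|=O(\delta|T|)$ and in particular $|T\triangle sT|=O(\delta|T|)$.

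The non-bipartiteness hypothesis now enters via the existence of an odd word $s_1s_2\cdots s_{2k+1}=e$ in $S$. Iterating the previous step along this word, the cumulative translation is trivial, yet each step flips the roles of $L$ and $R$ up to the $O(\delta|T|)$ slack. After the full odd word the identity map on $T$ must be compatible with having $L$ and $R$ swapped, modulo the accumulated defect. This yields a quantitative bound of the form $\bigl||L|-|R|\bigr|=O(\delta|T|)$ together with a near-equality $|R\triangle sL|=O(\delta|T|)$ for a specific $s$ (coming from a shortest odd closed walk at $e$). From this near-symmetry the final step is to take $U=L$ (or $U=R$): the almost-bipartite structure gives $\partial_{out}(L)\subseteq R\cup\partial_{out}(T)$, and the near-equality $|R\triangle sL|=O(\delta|T|)$ lets us replace $R$ by a translate of $L$, so the relevant contribution to $|\partial_{out}(L)|$ reduces to $I(L)+|\partial_{out}(T)|+O(\delta|T|)=O(\delta|L|)$, using $|L|$ comparable to $|T|/2$.

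The main obstacle is making the odd-cycle step quantitatively efficient: a priori the shortest odd word in $S$ representing the identity could be long, and the defect accumulated through the iteration would then dominate. To circumvent this I would either work with the \emph{shortest} odd closed walk through the identity (whose length is bounded in terms of data independent of $\delta$) or, more robustly, run an averaging argument over all translates $gT$, $g\in X$, in order to convert pointwise defects into an averaged bound and then pick a single good translate by pigeonhole. Executing this averaging while keeping $|U|$ a constant fraction of $|T|$ and $|\partial_{out}(U)|=O(\delta|T|)$ is the technical heart of the argument, and the concrete value $200$ comes out of the explicit constants tracked through Steps 2--3.
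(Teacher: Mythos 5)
This is cited verbatim from Moorman--Ralli--Tetali, and the paper itself only invokes it; but the paper's proofs of the analogous Theorems \ref{thm:cayleysumgraph} and \ref{thm:vertextransitivegraph} follow the MRT argument closely, so one can read off the intended route. Your proposal is genuinely different from MRT's, and it has two gaps that I do not see how to close.

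First, the odd-word iteration is not quantitatively viable, and you already sense this but your proposed fix does not work. You suggest controlling the accumulated slack either by ``the shortest odd closed walk through the identity (whose length is bounded in terms of data independent of $\delta$)'' or by averaging over translates. The first claim is false: the odd girth of a Cayley graph is not bounded independently of the group. For $C(\mathbb{Z}_n,\{\pm 1\})$ with $n$ odd, the shortest odd word in $S$ representing the identity has length exactly $n$, so the cumulative defect $O(k\,\beta_{out}|T|)$ you collect over $k$ applications of a generator can be of order $N\beta_{out}|T|$, which is useless. The averaging suggestion is left as a black box and, as stated, does not resolve the difficulty: what is actually needed is not a cumulative bound along a word but the existence of a \emph{single} group element $g$ for which $gL$ is ``balanced'' against $L$, meaning $|gL\cap L|\in(\delta|L|,(1-\delta)|L|)$. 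MRT (and the paper, in Theorems \ref{thm:cayleysumgraph} and \ref{thm:vertextransitivegraph}) obtain this by a dichotomy argument: assume every $g$ has either large or small overlap, show the ``large overlap'' set $X_1$ is a subgroup of index $2$ avoiding $S$, and conclude the graph is bipartite, contradiction. Non-bipartiteness enters only through this contradiction, not through any word-length iteration.

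Second, the final step --- taking $U=L$ (or $U=R$) as the witness set for $h_{out}$ --- cannot yield the desired bound. You correctly note $\partial_{out}(L)\subseteq R\cup\partial_{out}(T)$, but the piece of $\partial_{out}(L)$ lying in $R$ is typically of size $\Theta(|L|)$, not $O(\beta_{out}|L|)$: almost every vertex of $L$ has a neighbour in $R$ precisely because the graph is close to bipartite across $(L,R)$. Replacing $R$ by $sL$ does not help, since $|sL|=|L|$ and those $sL$-vertices are still genuine boundary vertices of $L$. This is why MRT must construct the more elaborate sets $A(g)=(gL\cap L)\cup(gR\cap R)$ and $B(g)=(gR\cap L)\cup(gL\cap R)$, for which Lemma \ref{lemma:subsetabcd} gives $|\partial_{out}(A(g))|,|\partial_{out}(B(g))|\le 2\beta_{out}|L\cup R|$; the balanced-overlap $g$ then guarantees both $|A(g)|$ and $|B(g)|$ are $\Omega(N)$, and one of them has size at most $N/2$. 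You would also need the ``no-man's land'' case analysis via $Y=\{x:\dist_G(x,L\cup R)\ge 2\}$ to handle $|L\cup R|$ small. As written, your sketch omits both the set construction and this case split, and the two steps you do spell out (odd-word iteration and $U=L$) each break down.

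A minor point: with the paper's convention $x\sim y\iff xy^{-1}\in S$, the graph automorphisms of $C(X,S)$ are \emph{right} multiplications, while left multiplication by $s\in S$ sends $x$ to a neighbour. Your sketch conflates these; it is harmless as a convention mismatch, but worth keeping straight when setting up the translate sets.
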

If we allow self-loops in a Cayley graph, that is, the set $S$ contains the identity element, Theorem \ref{thm:MRThoutbetaout} holds trivially true since $\beta_{out}=1$ and $h_{out}\leq 2$. If a Cayley graph is disconnected, that is, the set $S$ is not a generating one, Theorem \ref{thm:MRThoutbetaout} also holds trivially true since $h_{out}=0\leq \beta_{out}$.

We are now prepared to show Theorem \ref{thm:Main1}.
\begin{proof}[Proof of Theorem \ref{thm:Main1}]
It remains to show the lower bound of the interval. We apply Theorem \ref{thm: Main2} in the particular case that $\sigma\equiv -1$: Since $\Delta^\sigma_{|\sigma\equiv -1}=2I-\Delta$, 
we have 
\[{\lambda_1^\sigma}_{|\sigma\equiv -1}=2-\lambda_N=1+t_1.\]
Moreover, we have by Proposition \ref{prop:h=beta} that 
    \[{h^{\sigma}_{out}}_{|\sigma\equiv -1}\geq \beta_{out}.\]
Therefore, applying (\ref{eq:key}) in the case that $\sigma\equiv -1$ yields for a $d$-regular graph $G$ that
\begin{equation}\label{eq:key1}
    2-\lambda_N=1+t_1\geq \frac{\left(\sqrt{1+\beta_{out}}-1\right)^2}{2d}.
\end{equation}
Since $\beta_{out}\leq 1$ (see (\ref{eq:betaoutbound})), we have
\begin{equation}\label{eq:gapbetaout}
    2-\lambda_N\geq \frac{\beta_{out}^2}{16d}.
\end{equation}
Combining (\ref{eq:gapbetaout}) and Theorem \ref{thm:MRThoutbetaout} completes the proof.
\end{proof}

From the proof of Theorem \ref{thm:Main1}, we see that, for any non-bipartite $d$-regular graph satisfying $h_{out}\leq C_0\beta_{out}$ for some absolute constant $C_0$, the nontrivial eigenvalues of its normalized adjacency matrix lie in an interval 
\begin{equation}\label{eq:section5key}
      \left[ -1+\frac{ch_{out}^2}{d},1-\frac{Ch_{out}^2}{d}\right], 
\end{equation}
 for some absolute constants $c$ and $C$. In the remaining part of this section, we prove Theorem \ref{thm:Main3} saying that (\ref{eq:section5key}) holds for non-bipartite vertex transitive graphs and  non-bipartite Cayley sum graphs via extending Theorem \ref{thm:MRThoutbetaout}.

We first deal with Cayley sum graphs.

\begin{theorem}\label{thm:cayleysumgraph}
    Let $G=C_{\Sigma}(X,S)$ be a non-bipartite Cayley sum graph of a finite group $X$ with respect to a normal subset $S$.Then there exists an absolute constant $C$ such that
$$h_{out}\leq C\beta_{out}.$$
\end{theorem}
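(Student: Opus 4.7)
The plan is to adapt the combinatorial strategy behind Moorman--Ralli--Tetali's Theorem~\ref{thm:MRThoutbetaout} (their Cayley graph version) to the Cayley sum graph setting, with modifications that replace the vertex-transitive translation action available on Cayley graphs by the structural features specific to $C_{\Sigma}(X,S)$. The two such features I intend to use are the normality of $S$, which yields inner automorphisms of $X$ as graph automorphisms of $C_{\Sigma}(X,S)$, and the existence, for each $s\in S$, of the bijection $\phi_{s}:X\to X$ given by $\phi_{s}(x)=x^{-1}s$, which satisfies $x\cdot\phi_{s}(x)=s\in S$ and therefore maps every vertex to one of its own neighbors. Together these will serve as the substitute for the free right-translation action of $X$ used in \cite{MRT}, along the lines developed in \cite{BS_2021, Saha}.

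The first concrete step is to choose a partition $(L,R)$ with $L\cap R=\emptyset$ and $L\cup R\neq\emptyset$ that nearly attains $\beta_{out}$, i.e.
\[
\frac{I(L)+I(R)+|\partial_{out}(L\cup R)|}{|L\cup R|}\leq \beta_{out}+\varepsilon.
\]
A size dichotomy on $T:=L\cup R$ is then the driver of the rest of the proof. If $|T|\leq |V|/2$, the set $V_{1}:=T$ is already a valid competitor in the definition \eqref{eq:Cheeger_hout} of $h_{out}$, so one reads off $h_{out}\leq \beta_{out}+\varepsilon$ directly. If $|T|>|V|/2$, then $T$ is too large to test against $h_{out}$ and the main work begins.

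In this harder regime I would extract from $(L,R)$ a set $V_{1}\subseteq V$ with $|V_{1}|\leq|V|/2$ and $|\partial_{out}(V_{1})|\leq C\beta_{out}|V_{1}|$. The key identity at our disposal is
\[
\partial_{out}(V_{1})=\bigcup_{s\in S}\bigl(\phi_{s}(V_{1})\setminus V_{1}\bigr),
\]
which reduces the task to producing a $V_{1}$ that is simultaneously almost $\phi_{s}$-invariant for every $s\in S$, but in the collective sense of the union on the right-hand side rather than term-by-term (the latter would cost a spurious factor of $|S|$). The compositional identities $\phi_{s}\phi_{t}(x)=t^{-1}xs$ and $\phi_{s}^{2}(x)=s^{-1}xs$ recast products of the $\phi_{s}$'s as honest bi-translations and conjugations of $X$, and the normality of $S$ ensures that this conjugation action is compatible with the graph structure. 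Non-bipartiteness of $C_{\Sigma}(X,S)$ is then used to rule out the degenerate possibility that $V_{1}$ could sit inside a bipartite-like coset splitting of $X$ that would make the $\phi_{s}$-invariance vacuous.

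The main obstacle is precisely this collective $\phi_{s}$-invariance step. In MRT's proof for Cayley graphs, vertex transitivity allows free translation of any candidate subset by arbitrary group elements while preserving both $|A|$ and $|\partial_{out}(A)|$, which is central to balancing sizes in their counting argument. In the Cayley sum setting only the weaker substitute $\phi_{s}$ is available, and its failure to be an involution outside the abelian case (one has $\phi_{s}^{2}(x)=s^{-1}xs$ rather than $x$) is what makes the analog of the MRT counting argument nontrivial. My plan is to invoke the machinery of Biswas--Saha \cite{BS_2021} developed precisely for this non-transitive situation, supplemented by Saha's \cite{Saha} refinements in the non-bipartite regime, to carry out the construction of $V_{1}$ and close the estimate $h_{out}\leq C\beta_{out}$ with an absolute constant.
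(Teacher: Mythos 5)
You correctly identify the central structural fact that the paper exploits: since $S$ is normal, the conjugation $\phi_g(x)=gxg^{-1}$ is an automorphism of $C_\Sigma(X,S)$, and this replaces the free translation action available for ordinary Cayley graphs. However, the concrete plan you sketch afterwards drifts away from this and never closes the gap. Your proposed pivot is the identity $\partial_{out}(V_1)=\bigcup_{s\in S}\bigl(\phi_s(V_1)\setminus V_1\bigr)$ with $\phi_s(x)=x^{-1}s$, and the claim that the task ``reduces'' to producing a $V_1$ that is collectively almost $\phi_s$-invariant. But that identity is a tautological restatement of the definition of the outer boundary; asking for a $V_1$ with $|\bigcup_s \phi_s(V_1)\setminus V_1|\le C\beta_{out}|V_1|$ and $|V_1|\le N/2$ is literally asking for a witness of $h_{out}\le C\beta_{out}$. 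No reduction has been achieved, and the mechanism by which such a $V_1$ would be found is deferred entirely to citations of \cite{BS_2021, Saha} without any indication of what those arguments actually do.

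What the paper's proof actually supplies, and what is missing from your sketch, is a \emph{construction} of the test set and an \emph{existence} argument. Concretely: (i) a case split on the size of $Y=\{x:\mathrm{dist}(x,L\cup R)\ge 2\}$ rather than on $|T|$ alone (your dichotomy on $|T|$ misses the regime $|T|>N/2$ with $|Y|$ moderately large, where $Y$ itself is the test set and no automorphism is needed); (ii) for the genuinely hard case $|Y|\le\epsilon N$, the explicit sets $A(g)=(gLg^{-1}\cap L)\cup(gRg^{-1}\cap R)$ and $B(g)=(gRg^{-1}\cap L)\cup(gLg^{-1}\cap R)$, whose outer boundaries are controlled by a purely combinatorial boundary lemma (Lemma~\ref{lemma:subsetabcd}) plus the automorphism invariance $I(gLg^{-1})=I(L)$, $|\partial_{out}(g(L\cup R)g^{-1})|=|\partial_{out}(L\cup R)|$; (iii) a double-counting argument over $g\in X$ showing that if \emph{no} $g$ gives $|L\cap gLg^{-1}|\in(\delta|L|,(1-\delta)|L|)$, then $X_1=\{g:|L\cap gLg^{-1}|\ge(1-\delta)|L|\}$ is an index-two subgroup with $S\cap X_1=\emptyset$, making the graph bipartite --- a contradiction. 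Your sketch gestures at non-bipartiteness ``ruling out a degenerate coset splitting'' but never supplies this subgroup-theoretic argument, which is where all the work is. As written, the proposal identifies the right ingredients in its opening paragraph but does not contain a proof.
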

Our proof follows closely the proof of Moorman-Ralli-Tetali \cite[Theorem 2.4]{MRT}. One key modification is to use the following fact: For any $g\in X$, the map $\phi_g: X\to X$ defined via $\phi_g(x):=gxg^{-1}$ for each $x\in X$, provides an automorphism of the Cayley sum graph $C_{\Sigma}(X,S)$.

We prepare two lemmas from set theory for the proof, the first of which is straightforward. 
\begin{lemma}\label{lemma:subsetabc}
    For any subsets $A$, $B$ and $C$ of $X$, it holds that
    $$|A|+|B|+|C|-|X|\leq |A\cap B|+|B\cap C|+|C\cap A|.$$
\end{lemma}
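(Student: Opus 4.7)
\textbf{Proof proposal for Lemma \ref{lemma:subsetabc}.} This is a purely set-theoretic statement about three subsets of the finite set $X$, with no group structure involved, so the plan is to give a short combinatorial argument. I see two natural routes, and I would present whichever is shorter.

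The first approach is direct inclusion-exclusion. By the standard three-set formula,
\begin{equation*}
|A\cup B\cup C| \;=\; |A|+|B|+|C| \;-\; |A\cap B| \;-\; |B\cap C| \;-\; |C\cap A| \;+\; |A\cap B\cap C|.
\end{equation*}
Rearranging and using $|A\cap B\cap C|\ge 0$ gives
\begin{equation*}
|A|+|B|+|C| \;\le\; |A\cup B\cup C| + |A\cap B|+|B\cap C|+|C\cap A|,
\end{equation*}
and since $|A\cup B\cup C|\le |X|$, the claimed inequality follows immediately.

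The second approach, which I would mention as an alternative if space allows, is via pointwise indicator functions. For every $x\in X$, writing $a=\mathbf{1}_A(x)$, $b=\mathbf{1}_B(x)$, $c=\mathbf{1}_C(x)$, a case check over the eight values $(a,b,c)\in\{0,1\}^3$ verifies
\begin{equation*}
a+b+c-1 \;\le\; ab+bc+ca.
\end{equation*}
Summing over $x\in X$ and recognizing $\sum_x a = |A|$, $\sum_x ab=|A\cap B|$, etc., yields the inequality.

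There is essentially no obstacle here; the only mild subtlety is making sure the $-|X|$ on the left-hand side corresponds to summing the constant $-1$ over all of $X$, which is immediate. The inclusion-exclusion version is cleaner and is what I would include in the final write-up.
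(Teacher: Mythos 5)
Your proof is correct. The paper itself gives no proof of this lemma, dismissing it as straightforward, so there is no competing argument to compare against; both of your routes (inclusion-exclusion with $|A\cup B\cup C|\le|X|$ and $|A\cap B\cap C|\ge 0$, or the pointwise indicator-function bound $a+b+c-1\le ab+bc+ca$) are valid and either would serve as the omitted justification.
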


\begin{lemma}\label{lemma:subsetabcd}
    Let $G=(V,E)$ be a graph. Let $A$, $B$, $C$ and $D$ be subsets of $V$, then we have the following inequality:
    $$|\partial_{out}[(A\cap C)\cup(B\cap D)]|\leq I(A)+I(B)+I(C)+I(D)+|\partial_{out}(A\cup B)|+|\partial_{out}(C\cup D)|.$$
\end{lemma}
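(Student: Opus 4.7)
The plan is to prove the lemma by establishing the set-theoretic inclusion
\begin{equation*}
\partial_{out}\bigl[(A\cap C)\cup(B\cap D)\bigr]\subseteq \mathcal{I}(A)\cup \mathcal{I}(B)\cup \mathcal{I}(C)\cup \mathcal{I}(D)\cup \partial_{out}(A\cup B)\cup \partial_{out}(C\cup D),
\end{equation*}
where, for a set $L\subseteq V$, we write $\mathcal{I}(L):=\{z\in L:\exists\,w\in L,\,w\sim z\}$ for the underlying vertex set counted by $I(L)$, so that $|\mathcal{I}(L)|=I(L)$. Once the inclusion is in hand, the stated inequality follows directly from the union bound on cardinalities. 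The strategy thus reduces to a careful case analysis on a vertex $y\in\partial_{out}(W)$, where we abbreviate $W:=(A\cap C)\cup(B\cap D)$.

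Fix any such $y$, so $y\notin W$ and there is a neighbor $x\in W$ of $y$. By the symmetry of the right-hand side under swapping the pairs $(A,C)$ and $(B,D)$, we may assume $x\in A\cap C$. I would then split into three subcases. First, if $y\in A$, then $x\in A$ and $x\sim y$ immediately give $y\in\mathcal{I}(A)$. Second, if $y\in C$, the analogous argument places $y\in\mathcal{I}(C)$. The remaining case is $y\notin A$ and $y\notin C$, which is where the four sets $I(\cdot)$ and the two outer boundaries must interact.

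In this third subcase, since $x\in A\subseteq A\cup B$ and $x\sim y$, either $y\notin A\cup B$ (so $y\in\partial_{out}(A\cup B)$) or $y\in A\cup B$; in the latter situation $y\in B$ because $y\notin A$. Analogously, from $x\in C\subseteq C\cup D$ we obtain either $y\in\partial_{out}(C\cup D)$ or $y\in D$. The crucial observation is that the ``interior'' branches cannot hold simultaneously: if $y\in B$ and $y\in D$, then $y\in B\cap D\subseteq W$, contradicting $y\notin W$. Hence at least one of $\partial_{out}(A\cup B)$ or $\partial_{out}(C\cup D)$ must contain $y$, finishing the inclusion.

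The argument is elementary and I do not anticipate a serious obstacle; the only subtle point is the bookkeeping in the last subcase, where the resolution depends precisely on ruling out the joint membership $y\in B\cap D$ via the hypothesis $y\notin W$. This is exactly what forces the two outer boundary sets $\partial_{out}(A\cup B)$ and $\partial_{out}(C\cup D)$ to appear jointly on the right-hand side, matching the shape of the claimed inequality.
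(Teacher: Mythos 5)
Your proof is correct and follows essentially the same approach as the paper: a direct case analysis on a boundary vertex showing it lies in one of the six sets on the right, followed by a union bound on cardinalities. Your symmetry reduction to $x\in A\cap C$ slightly streamlines the paper's enumeration of four subcases for the $I(\cdot)$ contributions, but the underlying argument, including the key observation that $y$ cannot lie in both $B$ and $D$, is the same.
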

\begin{proof}
    Let $x\in\partial_{out}[(A\cap C)\cup(B\cap D)]$. We consider the following two cases respectively.

    \textbf{Case $1$}. $x\in(B\cap C)\cup(A\cap D)$. Then we have $\left(x\in(B\cap C)\,\text{ or }\,x\in(A\cap D)\right)$  and $\left(\exists \,y\in (A\cap C),\,\text{ such that}\, y\sim x\,\text{ or }\,\exists\, y\in(B\cap D),\,\text{ such that}\, y\sim x\right)$. This implies $4$ possibilities:
    \begin{itemize}
        \item $x\in(B\cap C)$, $\exists\, y\in (A\cap C)$ such that $y\sim x$, then it can be counted in $I(C)$.
        \item $x\in(B\cap C)$, $\exists\, y\in (B\cap D)$ such that $y\sim x$, then it can be counted in $I(B)$.
        \item $x\in(A\cap D)$, $\exists\, y\in (A\cap C)$ such that $y\sim x$, then it can be counted in $I(A)$.
        \item $x\in(A\cap D)$, $\exists\, y\in (B\cap D)$ such that $y\sim x$, then it can be counted in $I(D)$.
    \end{itemize}
    Hence, we obtain
    $$|\partial_{out}[(A\cap C)\cup(B\cap D)]\cap[(B\cap C)\cup(A\cap D)]|\leq I(A)+I(B)+I(C)+I(D).$$
    \textbf{Case $2$}. $x\notin (B\cap C)\cup(A\cap D)$. Then $x\notin (B\cap C)$ and $x\notin(A\cap D)$ and $x\notin(A\cap C)$ and $x\notin(B\cap D)$, and $\left(\exists\, y\in (A\cap C)\,\text{ such that }\,y\sim x\,\text{ or }\,\exists\, y\in(B\cap D)\,\text{ such that }\,y\sim x\right)$. This implies the following possibilities:
  \begin{itemize}
      \item If $x\notin(A\cup B)$, then $x\in \partial_{out}(A\cup B)$.
      \item IF $x \in A$, then $x\in A\setminus (C\cup D)$. Hence $x\in\partial_{out}(C\cup D)$.
      \item If $x\in B$, then $x\in B\setminus (C\cup D)$. Hence $x\in \partial_{out}(C\cup D)$.
  \end{itemize}
Therefore, we derive
  $$\partial_{out}[(A\cap C)\cup (B\cap D)]\setminus [(B\cap C)\cup(A\cap D)]\subseteq\partial_{out}(A\cup B)\cup\partial_{out}(C\cup D).$$

Collecting facts obtained in Case $1$ and Case $2$ completes the proof.
\end{proof}
Next, we prove Theorem \ref{thm:cayleysumgraph}.
\begin{proof}[Proof of Theorem \ref{thm:cayleysumgraph}]
Let $\epsilon>0$ be a constant to be determined later. We first observe that if $\beta_{out}\geq \epsilon$, then we have 
\[\beta_{out}\geq \epsilon\geq \frac{\epsilon}{2}h_{out},\]
since $h_{out}\leq 2$ is universally bounded. 

From now on, we assume $\beta_{out}<\epsilon$.
Suppose $L$ and $R$ be the two subsets achieving $\beta_{out}$, that is,
$$\beta_{out}=\frac{I(L)+I(R)+|\partial_{out}(L\cup R)|}{|L\cup R|}.$$
Consequently, we have $|\partial_{out}(L\cup R)|\leq \beta_{out}|L\cup R|$. We define 
\[Y=\{x\in X\,|\,\mathrm{dist}_{G}(x,L\cup R)\geq 2\}.\] 
Hence, we have $\partial_{out}(Y)=\partial_{out}(L\cup R)$ if $Y\neq\emptyset$. We consider the following three cases. Denote $N=|X|$.
    
\textbf{Case $1$}. $\epsilon N<|Y|<\frac{1}{2}N$. Then by definition of $h_{out}$, we derive
$$h_{out}\leq\frac{|\partial_{out}(Y)|}{|Y|}=\frac{|\partial_{out}(L\cup R)|}{|Y|}\leq\frac{\beta_{out}|L\cup R|}{\epsilon N}<\frac{1}{\epsilon}\beta_{out}.$$

\textbf{Case $2$}. $|Y|\geq\frac{1}{2}N$. Then $|L\cup R|<\frac{1}{2}N$ since $Y$ is disjoint with $L\cup R$. And we arrive at
$$h_{out}\leq\frac{|\partial_{out}(L\cup R)|}{|L\cup R|}\leq\beta_{out}.$$

\textbf{Case $3$}. $|Y|\leq\epsilon N$. For any element $g\in X$, we define the following two subsets:
$$A(g)=(gLg^{-1}\cap L)\cup(gRg^{-1}\cap R),\,\,\text{and}\,\,B(g)=(gRg^{-1}\cap L)\cup(gLg^{-1}\cap R).$$
First observe that $A(g)$ and $B(g)$ are disjoint. Indeed, we have 
\begin{align*}
A(g)\cap B(g)=&[(gLg^{-1}\cap L)\cap(gLg^{-1}\cap R)]\cup[(gLg^{-1}\cap L)\cap(gRg^{-1}\cap L)]\\
\cup&[(gRg^{-1}\cap R)\cap(gLg^{-1}\cap R)]\cup[(gRg^{-1}\cap R)\cap(gRg^{-1}\cap L)],
\end{align*}
where the $4$ subsets in square brackets are all empty since $L$ and $R$ are disjoint.
By Lemma \ref{lemma:subsetabcd}, we estimate 
$$|\partial_{out}(A(g))|\leq I(L)+I(R)+I(gLg^{-1})+I(gRg^{-1})+|\partial_{out}(L\cup R)|+|\partial_{out}(g(L\cup R)g^{-1})|,$$
and
$$|\partial_{out}(B(g))|\leq I(L)+I(R)+I(gLg^{-1})+I(gRg^{-1})+|\partial_{out}(L\cup R)|+|\partial_{out}(g(L\cup R)g^{-1})|.$$

We claim that 
\begin{equation}\label{eq:claim}
    I(L)=I(gLg^{-1}),\,\, I(R)=I(gRg^{-1})\,\,\text{and}\,\, |\partial_{out}(L\cup R)|=|\partial_{out}(g(L\cup R)g^{-1})|.
\end{equation}
Let us define the map $\phi_g: X\to X$ such that $\phi_g(x)=gxg^{-1}$ for any $x\in X$. Then we observe that
\[\phi_g(x^{-1}s)=\phi_g(x)^{-1}\phi_g(s),\,\,\text{for any}\,\,x\in X, \,s\in S.\]

Applying the claim (\ref{eq:claim}), we arrive at
\begin{equation*}
    |\partial_{out}(A(g))|,\, |\partial_{out}(B(g))|\leq 2\left(I(L)+I(R)+|\partial_{out}(L\cup R)|\right).
\end{equation*}
Since $A(g)$ and $B(g)$ are disjoint, at least one of them has cardinality no greater than $N/2$. Hence, we estimate
\begin{equation}\label{eq:houtAgBg}
    h_{out}\leq \frac{2\left(I(L)+I(R)+|\partial_{out}(L\cup R)|\right)}{\min\{|A(g)|,|B(g)|\}}\leq \frac{2\beta_{out}|L\cup R|}{\min\{|A(g)|,|B(g)|\}}.
\end{equation}
Next, we aim at estimating the denominator. 

Without loss of generality, we assume $|L|\geq |R|$. 
Then, we bound $|L|$ from below
\begin{equation*}
    |L|\geq \frac{|L\cup R|}{2}=\frac{N-|Y|-|\partial_{out}(L\cup R)|}{2}\geq \frac{1-\epsilon-\beta_{out}}{2}N.
\end{equation*}
Suppose that $|L|\geq \frac{1+\epsilon}{2}N$. Then we derive that $|L|-I(L)\leq |X\setminus L|$ by considering the number of edges connection the sets $\{x\in X\,|\,\text{there is no neighboring vertex of $x$ in $L$}\}$ and $X\setminus L$. This tells $I(L)\geq \epsilon N$ and, hence, $\beta_{out}\geq \frac{I(L)}{N}\geq \epsilon$.

Therefore, it remains to consider the case 
\begin{equation}\label{eq:Lbounds}
     \frac{1-\epsilon-\beta_{out}}{2}N\leq |L|\leq \frac{1+\epsilon}{2}N.
\end{equation}
In this case, we claim there exists an element $g\in X$, such that 
\begin{equation}\label{eq:claimdelta}
    |L\cap gLg^{-1}|\in (\delta|L|, (1-\delta)|L|)
\end{equation} for some $\delta>0$ to be determined. We prove this claim by contradiction. Suppose that there is no such $g\in X$. We will show that the graph is bipartite. 

Define two sets
\[X_1=\{g\in X\,|\,|gLg^{-1}\cap L|\geq (1-\delta)|L|\}\,\,\text{and}\,\,X_2=\{g\in X\,|\,|gLg^{-1}\cap L|\leq \delta|L|\}.\]
By assumption, this gives a partition of $X$.
First, we have $X_1^2\subseteq X_1$ if $\delta<\frac{1}{3}$. Indeed, we have for any $g,h\in X_1$ that
\begin{align*}
    |(gh)L(gh)^{-1}\cap L|=&|L|-|L\setminus (gh)L(gh)^{-1}|\geq |L|-|L\setminus gLg^{-1}|-|gLg^{-1}\setminus (gh)L(gh)^{-1}|\\
    =&|L\cap gLg^{-1}|-|L\setminus hLh^{-1}|=|L\cap gLg^{-1}|-|L|+|L\cap hLh^{-1}|\\
    \geq &(1-2\delta)|L|>\delta |L|,
\end{align*}
where we use $\delta<\frac{1}{3}$ and the inequality $|A\setminus B|+|B\setminus C|\geq |A\setminus C|$ for any subsets $A,B$ and $C$. This tells that $X_1$ is a subgroup of $X$.

Secondly, we have $X_2^2\subseteq X_1$ if $\delta<1-\frac{2}{3(1-\epsilon-\beta_{out})}$. Indeed, we have for any $g,h\in X_2$ that
\begin{align*}
    &|(gh)L(gh)^{-1}\cap L|\\
    \geq &-|L\cap gLg^{-1}|-|gLg^{-1}\cap (gh)L(gh)^{-1}|+|L|+|gLg^{-1}|+|(gh)L(gh)^{-1}|-|X|\\
    =&-|L\cap gLg^{-1}|-|L\cap hLh^{-1}|+3|L|-N\\
    \geq & -2\delta|L|+3|L|-\frac{2}{1-\epsilon-\beta_{out}}|L|
    >\delta |L|,
\end{align*}
where we use Lemma \ref{lemma:subsetabc}, (\ref{eq:Lbounds}) and $\delta<1-\frac{2}{3(1-\epsilon-\beta_{out})}$.

We observe that
\begin{align*}
    \sum_{g\in X_1}|L\cap gLg^{-1}|=&\sum_{g\in X_1}\sum_{x\in L: \,\exists\,y\in L\,\text{such that}\,gyg^{-1}=x}1=\sum_{x\in L}\sum_{g\in X_1: g^{-1}xg\in L}1\\
    =&\sum_{x\in L\cap X_1}\sum_{g\in X_1: g^{-1}xg\in L}1+\sum_{x\in L\cap X_2}\sum_{g\in X_1: g^{-1}xg\in L}1\\
    =&\sum_{x\in L\cap X_1}\sum_{g\in X_1: g^{-1}xg\in L\cap X_1}1+\sum_{x\in L\cap X_2}\sum_{g\in X_1: g^{-1}xg\in L\cap X_2}1\\
    \leq & |L\cap X_1|^2+|L\cap X_2|^2=|L|^2-2|L\cap X_1||L\cap X_2|.
\end{align*}
This leads to
\begin{equation}\label{eq:LcapX1LcapX2}
    (1-\delta)|L||X_1|\leq |L|^2-2|L\cap X_1||L\cap X_2|.
\end{equation}
Assume that $X_2$ is empty. Then, we have $|X_1|=N$. So, if $\delta<\frac{1}{3}$, we estimate 
\[|L|\geq (1-\delta)N>\frac{2}{3}N.\]
This contradicts to $|L|\leq\frac{1+\epsilon}{2}N$ in (\ref{eq:Lbounds}) once we choose $\epsilon<\frac{1}{3}$.

Therefore, the subset $X_2$ is not empty. Then $X_1$ is a proper subgroup of $X$ with its complement $X_2$ satisfying $X_2^2\subseteq X_1$. Hence, $X_1$ is a subgroup of index $2$ and $X_2$ is its unique nontrivial coset. In particular, we have $|X_1|=|X_2|=\frac{N}{2}$. 
Then we derive from (\ref{eq:LcapX1LcapX2}) that
\begin{align*}
    \frac{1-\delta}{2}N|L|\leq \frac{1+\epsilon}{2}N|L|-2|L\cap X_1||L\cap X_2|.
\end{align*}
This yields that
\begin{equation*}
    |L\cap X_1||L\cap X_2|\leq \frac{\epsilon+\delta}{4}N|L|.
\end{equation*}
Consequently, there exists an $i\in \{1,2\}$ such that $|L\cap X_i|\leq \frac{1}{2}\sqrt{(\epsilon+\delta)N|L|}$. Let $j\in\{1,2\}\setminus\{i\}$ be the other index, for which we have 
\begin{equation}\label{ineq:XiXj}
    |L\cap X_j|=|L|-|L\cap X_i|\geq |L|-\frac{1}{2}\sqrt{(\epsilon+\delta)N|L|}.
\end{equation}

Let us denote by $A^c$ the complement of a subset $A$. For any $s\in S$, we estimate
\begin{align*}
    |X_j^{-1}s\cap X_j|= & |(X_j\cap L)^{-1}s\cap X_j\cap L|+|(X_j\cap L)^{-1}s\cap X_j\cap L^c|+|(X_j\cap L^c)^{-1}s\cap X_j|\\
    \leq & I(L)+|X_j\cap L^c|+|(X_j\cap L^c)^{-1}s|=I(L)+2|X_j\cap L^c|\\
    \leq &\beta_{out}N+2(|X_j|-|X_j\cap L|)
    \leq  \beta_{out}N+N-2|L|+\sqrt{(\epsilon+\delta)N|L|}\\
    \leq & \left(\epsilon+2\beta_{out}+\sqrt{\frac{(\epsilon+\delta)(1+\epsilon)}{2}}\right)N.
\end{align*}
The second inequality above is from the definition of $\beta_{out}$ and the fact that $L$ and $R$ separate the vertex set $V$ such that $|L\cup R|=N$. The third inequality is from the inequality (\ref{ineq:XiXj}) and the fact that $X_1$ is a subgroup of index $2$ with its unique nontrivial coset $X_2$. 
Once we choose $\delta<\left(\frac{1}{2}-\epsilon-2\beta_{out}\right)^2\frac{2}{1+\epsilon}-\epsilon$, we have $|X_j^{-1}s\cap X_j|<\frac{N}{2}$. This implies that \[S\cap X_1=\emptyset,\] since if there exists $s\in X_1\cap S$, we have $|X_j^{-1}s\cap X_j|=|X_j|=\frac{N}{2}$, which is a contradiction.

Then we derive that the graph is bipartite. In fact, if $g\in X_1$ and $s\in S$, then $g^{-1}s\in g^{-1}X_2=X_2$. Similarly, if $g\in X_2$ and $s\in S$, then $g^{-1}s\in g^{-1}X_2=X_1$. Therefore, the graph is bipartite, which is a contradiction. The proves our claim that there exists an element $g\in X$ such that
(\ref{eq:claimdelta}) holds.

Now we can carry out the estimates of $|A(g)|$ and $|B(g)|$. Let $g$ be the one satisfying (\ref{eq:claimdelta}). Then we derive
\begin{equation*}
    |A(g)|\geq |gLg^{-1}\cap L|\geq \delta |L|\geq \frac{\delta(1-\epsilon-\beta_{out})}{2}N,
\end{equation*}
and
\begin{align*}
    |B(g)|\geq &|L\cap gRg^{-1}|=|L\setminus (gRg^{-1})^c|\geq |L\setminus gLg^{-1}|-|(gRg^{-1})^c\setminus gLg^{-1}|\\
    = & |L|-|L\cap gLg^{-1}|-|R^c\setminus L|=|L|-|L\cap gLg^{-1}|-|(L\cup R)^c|\\
    \geq & \frac{\delta(1-\epsilon-\beta_{out})}{2}N-(\epsilon+\beta_{out})N.
\end{align*}
Inserting the above two estimates into (\ref{eq:houtAgBg}) yields
\begin{equation*}
    h_{out}\leq \frac{2}{\frac{\delta(1-\epsilon-\beta_{out})}{2}-(\epsilon+\beta_{out})}\beta_{out}.
\end{equation*}
Let us collect our choices of $\delta$ and $\epsilon$. We have required that 
\[\delta<\frac{1}{3},\,\,\delta<1-\frac{2}{3(1-\epsilon-\beta_{out})},\,\,\text{and}\,\,\delta<\left(\frac{1}{2}-\epsilon-2\beta_{out}\right)^2\frac{2}{1+\epsilon}-\epsilon.\]
Since we have assumed $\beta_{out}\leq \epsilon$, it is enough to require that
\[\delta<\frac{1}{3},\,\,\delta<1-\frac{2}{3(1-2\epsilon)},\,\,\text{and}\,\,\delta<\left(\frac{1}{2}-3\epsilon\right)^2\frac{2}{1+\epsilon}-\epsilon.\]
To that end, we set $\epsilon=\frac{1}{100}$ and $\delta=\frac{1}{5}$. In conclusion, we have
\begin{equation*}
    \beta_{out}\geq \begin{cases}
        \epsilon h_{out}, & \text{if $\epsilon N\leq |Y|\leq \frac{N}{2}$,}\\
        h_{out}, & \text{if $|Y|\geq \frac{N}{2}$,}\\
        \frac{\epsilon}{2}h_{out}, & \text{if $\max\{|L|,|R|\}\geq\frac{1+\epsilon}{2}N$,}\\
        \frac{\epsilon}{2}h_{out}, & \text{if $\beta_{out}\geq \epsilon$,}\\
        \frac{\delta(1-2\epsilon)-4\epsilon}{4}h_{out}, & \text{otherwise.}
    \end{cases}
\end{equation*}
This completes the proof. 
\end{proof}

Next, we deal with vertex transitive graphs. 
\begin{theorem}\label{thm:vertextransitivegraph}
    Let $G=(V,E)$ be a non-bipartite finite vertex transitive graph.Then there exists an absolute constant $C$ such that
$$h_{out}\leq C\beta_{out}.$$
\end{theorem}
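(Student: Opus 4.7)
The plan is to follow the proof of Theorem \ref{thm:cayleysumgraph} verbatim, replacing the conjugation action $\phi_g : x \mapsto gxg^{-1}$ (which gives an automorphism of the Cayley sum graph when $S$ is normal) by the direct action on $V$ of an arbitrary element of the automorphism group $\Gamma := \mathrm{Aut}(G)$. The crucial properties used are that $\Gamma$ acts transitively on $V$ and every $g \in \Gamma$ is by construction a graph automorphism, so $g$ preserves the invariants $I(\cdot)$ and $|\partial_{out}(\cdot)|$. As in the Cayley sum case I would fix a small absolute constant $\epsilon > 0$, reduce to the situation $\beta_{out} < \epsilon$, choose optimisers $L, R$ for $\beta_{out}$, and split on the cardinality of $Y = \{x \in V : \mathrm{dist}_G(x,L\cup R) \geq 2\}$. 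The subcases $\epsilon N < |Y| < N/2$ and $|Y| \geq N/2$ proceed exactly as before, so the only nontrivial subcase is $|Y| \leq \epsilon N$.

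In this subcase, for each $g \in \Gamma$ I would form the two disjoint sets $A(g) = (gL \cap L) \cup (gR \cap R)$ and $B(g) = (gR \cap L) \cup (gL \cap R)$. Since $g$ is a graph automorphism, the identities $I(gL) = I(L)$, $I(gR) = I(R)$ and $|\partial_{out}(g(L\cup R))| = |\partial_{out}(L\cup R)|$ are immediate, and Lemma \ref{lemma:subsetabcd} then gives $|\partial_{out}(A(g))|,\,|\partial_{out}(B(g))| \leq 2\beta_{out}|L\cup R|$, hence $h_{out} \leq 2\beta_{out}|L\cup R|/\min\{|A(g)|,|B(g)|\}$. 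Assuming $|L|\geq |R|$ and handling the easy subcase $|L| \geq (1+\epsilon)N/2$ (which forces $\beta_{out} \geq \epsilon$), the task reduces to finding $g \in \Gamma$ with $|L \cap gL| \in (\delta|L|, (1-\delta)|L|)$ for a fixed constant $\delta$. Introducing $\Gamma_1 = \{g \in \Gamma : |L \cap gL| \geq (1-\delta)|L|\}$ and $\Gamma_2 = \Gamma \setminus \Gamma_1$, the same manipulations as in Theorem \ref{thm:cayleysumgraph} (using the identity $|L \cap gL| = |L \cap g^{-1}L|$, the elementary inclusion $|A \cap C| \geq |A \cap B| + |B \cap C| - |B|$ applied to $L, gL, ghL$, and Lemma \ref{lemma:subsetabc}) show that $\Gamma_1$ is a subgroup and $\Gamma_2^2 \subseteq \Gamma_1$, provided $\delta$ is chosen small enough.

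The crucial and subtlest step, distinct from the Cayley sum case, is to derive bipartiteness of $G$ from the existence of a proper index-$2$ subgroup $\Gamma_1 \leq \Gamma$; in the Cayley sum case the vertex set is literally the group and a bipartition is automatic, but here the orbits of $\Gamma_1$ on $V$ could a priori coincide with $V$. A first averaging argument using $\sum_{g \in \Gamma} |L\cap gL| = |\Gamma||L|^2/N$ (by transitivity of $\Gamma$) rules out $\Gamma_2 = \emptyset$; the same averaging applied inside $\Gamma_1$ rules out that $\Gamma_1$ is transitive on $V$, so $\Gamma_1$ has exactly two orbits $V_1, V_2$ of size $N/2$. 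Averaging once more, now using transitivity of $\Gamma_1$ on each $V_i$, yields $|L\cap V_1||L\cap V_2| \leq (\epsilon+\delta)N|L|/4$, so for some $j$ we have $|L\cap V_j| \geq |L| - \tfrac{1}{2}\sqrt{(\epsilon+\delta)N|L|}$. If there existed an edge inside $V_j$, the transitivity of $\Gamma_1$ on $V_j$ would force every vertex of $V_j$ to have a neighbour in $V_j$, and a double count exploiting the concentration of $L$ in $V_j$ would then force $I(L) \gtrsim N/2$, contradicting $\beta_{out} < \epsilon$. Hence $V_1 \sqcup V_2$ is a graph bipartition, contradicting the non-bipartiteness of $G$. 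Once such a good $g$ is produced, the lower bounds $|A(g)| \geq \delta(1-\epsilon-\beta_{out})N/2$ and $|B(g)| \geq \delta(1-\epsilon-\beta_{out})N/2 - (\epsilon+\beta_{out})N$ follow exactly as in the Cayley sum proof, and a suitable absolute choice of $\epsilon$ and $\delta$ yields $h_{out} \leq C\beta_{out}$.

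The main obstacle, therefore, is precisely the derivation of graph-bipartiteness from the existence of a proper subgroup $\Gamma_1 \leq \Gamma$ of index $2$: one must separately exclude the possibility that $\Gamma_1$ still acts transitively on $V$ (resolved by the averaging argument above) and, when $\Gamma_1$ acts with two orbits, rule out intra-orbit edges via a counting argument that relies on the concentration of $L$ in a single orbit. Every other step of the proof follows the Cayley sum case \emph{mutatis mutandis}.
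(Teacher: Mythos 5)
Your proposal is correct and follows the paper's route closely, with two legitimate deviations. First, the paper takes $X \leq \mathrm{Aut}(G)$ to be a \emph{minimal} transitive subgroup, so that once $X_1 \subsetneq X$ is exhibited it automatically fails to be transitive and hence, being of index $2$ in the transitive group $X$, has exactly two orbits; you work with $\Gamma = \mathrm{Aut}(G)$ itself and supply a second averaging argument inside $\Gamma_1$ to rule out its transitivity. Both devices work; yours trades the minimality hypothesis on the acting group for one extra use of the stabilizer-counting identity. Second, the paper follows Saha in invoking the Birkhoff--von Neumann theorem to write the adjacency matrix of the $m$-regular induced subgraph on $O_j$ as a sum of permutation matrices and then estimating $|\theta_k(O_j)\cap O_j|$; your double count --- at most $|W|$ vertices of the $m$-regular induced subgraph on $V_j$ can have all their $V_j$-neighbours inside $W := V_j\setminus L$, so at least $N/2-2|W|$ vertices of $V_j \cap L$ have a neighbour in $V_j \cap L$ --- yields exactly the same bound $I(L) \ge N/2 - 2|W|$ and is more elementary. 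Two points worth spelling out when you write this up: (i) if the witnessing intra-orbit edge sits in $V_i$ rather than $V_j$, translating it by any $g\in\Gamma_2$ (which swaps the two $\Gamma_1$-orbits, since they form a block system permuted transitively by $\Gamma$) produces an edge inside $V_j$, so your ``edge inside $V_j$'' case is indeed the only one to treat; (ii) the sharp bound is $I(L)\ge N/2-2|W|$ rather than $I(L)\gtrsim N/2$, and one should verify that the chosen constants (e.g. $\epsilon=1/100$, $\delta=1/5$) make $N/2-2|W|$ exceed $\epsilon N$, which they do.
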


Our proof is built upon the proof of Moorman-Ralli-Tetali \cite[Theorem 2.4]{MRT} modified with key ideas from Biswas-Saha \cite{BS_2021} and a very recent preprint of Saha \cite{Saha}. 
Saha \cite{Saha} shows for vertex transitive graphs, (twisted) Cayley and (twisted) Cayley sum graphs the strong result that $\beta\geq C\frac{h}{d}$ for some absolute constant $C$, where $\beta$ is Trevisian's bipartiteness constant (recall (\ref{eq:Trevisan})) and $h$ is the Cheeger constant (recall (\ref{eq:Cheegerhhout})), both defined via the edge boundaries. This confirms a previous conjecture of Moorman, Ralli and Tetali. Note that $\beta\geq C\frac{h}{d}$ does not implies $\beta_{out}\geq Ch_{out}$, although both $\beta, \beta_{out}$ and $h,h_{out}$ are closely related.

\begin{proof}[Proof of Theorem \ref{thm:vertextransitivegraph}]
Let $\epsilon>0$ be a constant to be determined. We only need to consider the case that $\beta_{out}<\epsilon$.
Let $L$ and $R$ be the two subsets of $V$ achieving $\beta_{out}$. Define 
\[Y=\{v\in V\,|\, \mathrm{dist}_G(v,L\cup R)\geq 2\}.\]
We denote $N=|V|$. If $\epsilon N<|Y|\leq \frac{1}{2}N$, then $h_{out}<\frac{1}{\epsilon}\beta_{out}$; If $|Y|\geq \frac{1}{2}N$, then $h_{out}\leq \beta_{out}$. From now on, we assume $|Y|\leq \epsilon N$.

Let $X$ be a subgroup of $\mathrm{Aut}(G)$ which acts transitively on $V$ such that no proper subgroup of $X$ acts transitively on $V$. For any $g\in X$, we define the following two sets:
\[A(g)=(gL\cap L)\cup(gR\cap R),\,\,\text{and}\,\,B(g)=(gR\cap L)\cup(gL\cap R).\]
It is direct to check that $A(g)$ and $B(g)$ are disjoint, and we have via applying Lemma \ref{lemma:subsetabcd} 
\begin{equation*}
    h_{out}\leq \frac{2N}{\min\{|A(g)|,|B(g)|\}}\beta_{out}
\end{equation*}
Without loss of generality, we assume $|L|\geq |R|$. If $|L|\geq \frac{1+\epsilon}{2}N$, then we have $\beta_{out}\geq \epsilon$. Hence, we assume, from now on, that $|L|\leq \frac{1+\epsilon}{2}N$. One check also a lower bound $|L|\geq \frac{1-\epsilon-\beta_{out}}{2}N$.

Next, we show there exists a $g\in X$ such that 
\begin{equation}\label{eq:tobeproved}
    |L\cap gL|\in (\delta|L|, (1-\delta)|L|),
\end{equation}
for some constant $\delta>0$ to be determined.
We prove by contradiction. Assume no such $g\in X$ exist. Then we have a partition of $X$ given by 
\[X_1=\{g\in X\,|\,|L\cap gL|\geq (1-\delta)|L|\}\,\,\text{and}\,\,X_2=\{g\in X\,|\,|L\cap gL|\leq \delta|L|\}.\]
We then check that $X_1^2\subseteq X_1$ if $\delta<\frac{1}{3}$ and $X_2^2\subseteq X_1$ if $\delta<1-\frac{2}{3(1-\epsilon-\beta_{out})}$.

We next show $X_2$ is not empty. Assume to the contrary that $X_2=\emptyset$. Then, we have $X_1=X$. We calculate
\begin{equation*}
    \sum_{g\in X}|gL\cap L|=\sum_{g\in X}\sum_{u\in L:\, \exists\, v\in L \,\text{such that}\, gv=u}1=\sum_{u\in L}\sum_{g\in X: g^{-1}u\in L}1=\sum_{u\in L}\sum_{v\in L}|X_v|=t|L|^2,
\end{equation*}
where we denote $X_v$ the stabilizer subgroup of $v$ and $t=|X_v|$ stands for its size. This leads to
\begin{equation*}
    t|L|^2=\sum_{g\in X}|gL\cap L|\geq (1-\delta)|L||X|=(1-\delta)|L|tN,
\end{equation*}
and furthermore, $|L|\geq (1-\delta)|L|>\frac{2}{3}N$ if $\delta<\frac{1}{3}$.
This contradicts to $|L|\leq\frac{1+\epsilon}{2}N$ once we choose $\epsilon<\frac{1}{3}$. That is, $X_2$ can not be empty.

Then $X_1$ is a proper subgroup of $X$ with its complement $X_2$ satisfying $X_2^2\subseteq X_1$. Hence, $X_1$ is a subgroup of index $2$ and $X_2$ is its unique nontrivial coset. In particular, we have $|X_1|=|X_2|=\frac{tN}{2}$. Moreover, the action of $X_1$ on $V$ has two obits $O_1$ and $O_2$. Their sizes satisfy $|O_1|=|O_2|=\frac{N}{2}$. For $v\in O_1$, we check that
\[O_1=\{x_1v\,|\,x_1\in X_1\}\,\,\text{and}\,\,O_2=\{x_2v\,|\,x_2\in X_2\}.\]
Let us compute 
\begin{align*}
    \sum_{g\in X_1}|L\cap gL|=&\sum_{g\in X_1}\sum_{u\in L: \,\exists\,v\in L\,\text{such that}\,gv=u}1=\sum_{u\in L}\sum_{g\in X_1: g^{-1}u\in L}1\\
    =&\sum_{u\in L\cap O_1}\sum_{g\in X_1: g^{-1}u\in L}1+\sum_{u\in L\cap O_2}\sum_{g\in X_1: g^{-1}u\in L}1\\
    =&\sum_{u\in L\cap O_1}\sum_{g\in X_1: g^{-1}u\in L\cap O_1}1+\sum_{u\in L\cap O_2}\sum_{g\in X_1: g^{-1}u\in L\cap O_2}1\\
    =&\sum_{u\in L\cap O_1}\sum_{v\in L\cap O_1}|\{g\in X_1\,|\,g^{-1}u=v\}|+\sum_{u\in L\cap O_2}\sum_{v\in L\cap O_2}|\{g\in X_1\,|\,g^{-1}u=v\}|.
\end{align*}
We observe that for any given $u,v\in O_1$,
\[|\{g\in X_1\,|\,g^{-1}u=v\}|+|\{h\in X_2\,|\,h^{-1}u=v\}|=t,\]
and the set $\{h\in X_2\,|\,h^{-1}u=v\}=\emptyset$. That is, we have $|\{g\in X_1\,|\,g^{-1}u=v\}|=t$. Similarly, we derive for any $u,v\in O_2$ that $|\{h\in X_2\,|\,h^{-1}u=v\}|=t$. This gives that 
\[\sum_{g\in X_1}|L\cap gL|=t\left(|L\cap O_1|^2+|L\cap O_2|^2\right).\]
By definition of $X_1$ and the fact $|X_1|=\frac{tN}{2}$, we obtain
$\frac{1-\delta}{2}N|L|\leq |L|^2-2|L\cap O_1||L\cap O_2|$, and hence
\begin{equation*}
    |L\cap O_1||L\cap O_2|\leq \frac{\epsilon+\delta}{4}N|L|.
\end{equation*}

Therefore, there exists $i\in \{1,2\}$ such that $    |L\cap O_i|\leq \frac{1}{2}\sqrt{(\epsilon+\delta)N|L|}$.
Let $j\in \{1,2\}\setminus\{i\}$ be the other index. Then we have
\begin{equation}\label{eq:LcapOi}
 |L\cap O_j|\geq |L|-\frac{1}{2}\sqrt{(\epsilon+\delta)N|L|}.
\end{equation}
We start to argue that the graph $G$ is bipartite with the bi-partition given by $O_1$ and $O_2$. Here, we borrow a key idea from \cite[Lemma 2.2]{Saha}. 

We argue by contradiction. Suppose that either $O_i$ or $O_j$ contains a pair of adjacent vertices $u$ and $v$. If $u,v\in O_j$, we have $O_j=X_1u=X_1v$. If $u,v\in O_i$, we have $O_j=X_2u=X_2v$. Here, we use the notation $X_1u=\{x_1u\,|\,x_1\in X_1\}$.  Let us consider the induced subgraph of $O_j$ in $G$. Since either $O_j=X_1u=X_1v$ or $O_j=X_2u=X_2v$, every vertex in $O_j$ has at least one neighbor in the subgraph. That is, the subgraph of $O_j$ is non-trivial. Moreover, this subgraph of $O_j$ is regular. Indeed, for any $u,v\in O_j$, there exists an $h\in X_1$ such that $v=hu$. Therefore, we have 
\[|\{w\in O_j: w\sim u\}|=|\{h(w)\in O_j: h(w)\sim h(u)\}|=|\{w\in O_j: w\sim v\}|.\]
That is, every pair of vertices in $O_j$ have the same degree $m$ in the subgraph. So the normalized adjacency matrix $\frac{1}{m}A_{O_j}$ of the subgraph of $O_j$ is doubly stochastic. By Birkhoff-von Neumann theorem, there exists permutations $\theta_1,\ldots,\theta_m$ of $O_j$ such that the adjacency matrix $A_{O_j}=\sum_{k=1}^mP_{\theta_k}$, where $P_{\theta_k}$ stands for the permutation matrix of $\theta_k$. Indeed, we have $v$ and $\theta_k(v)$ are adjacent for any $v\in O_j$ and any $k\in \{1,\dots,m\}$. Moreover, the neighborhood $\{u\in O_j\,|\,u\sim v\}$ of any vertex $v\in O_j$ coincides with $\bigcup_{k=1}^m\{\theta_k(v)\}$.

For any $k\in \{1,\ldots,m\}$, we have 
\begin{equation}\label{eq:tobecontradict}
    |\theta_k(O_j)\cap O_j|=|O_j|=\frac{N}{2}.
\end{equation}
However, we have, on the other hand, 
\begin{align*}
    |\theta_k(O_j)\cap O_j|\leq &|\theta_k(O_j\cap L)\cap O_j|+|\theta_k(O_j\cap L^c)\cap O_j|\\
    =& |\theta_k(O_j\cap L)\cap O_j\cap L|+|\theta_k(O_j\cap L)\cap O_j\cap L^c|+|\theta_k(O_j\cap L^c)\cap O_j|\\
    \leq & I(L)+2|O_j\cap L^c|=I(L)+2(|O_j|-|L\cap O_j|)\\
    \leq& \beta_{out}N+N-2|L|+\sqrt{(\epsilon+\delta)N|L|}\\
    \leq &\left(\epsilon+2\beta_{out}+\sqrt{\frac{(\epsilon+\delta)(1+\epsilon)}{2}}\right)N,
\end{align*}
where the estimate (\ref{eq:LcapOi}) has been applied. Once we choose $\delta<\left(\frac{1}{2}-\epsilon-2\beta_{out}\right)^2\frac{2}{1+\epsilon}-\epsilon$, we have $|\theta_k(O_j)\cap O_j|<\frac{N}{2}$, which contradicts to the fact (\ref{eq:tobecontradict}). This proves that the graph $G$ is bipartite with the bi-partition $O_1$ and $O_2$. This contradicts to our assumption that $G$ is non-bipartite. Therefore, we finish the proof of (\ref{eq:tobeproved}).

Now, following the same line as in the proof of Theorem \ref{thm:cayleysumgraph}, we can estimate $|A(g)|$ and |B(g)| to conclude that 
\begin{equation*}
    h_{out}\leq \frac{2}{\frac{\delta(1-\epsilon-\beta_{out})}{2}-(\epsilon+\beta_{out})}\beta_{out}.
\end{equation*}
Recall that we have assumed $\beta_{out}<\epsilon$. Setting $\epsilon=\frac{1}{100}$ and $\delta=\frac{1}{5}$ fulfils all our requirements. This completes the proof. 
\end{proof}
\section{Extensions on connection graphs}\label{extensions}
Theorem \ref{thm:main} is extendable to a general \emph{connection graph}. Let $G=(V,E)$ be an undirected finite graph and $E^{or}:=\{(x,y),(y,x)\,|\,\{x,y\}\in E,\,x\neq y\}\cup \{(x,x)\,|\,\{x,x\}\in E\}$ be the set of oriented edges and self-loops. A \emph{connection} $\sigma: E^{or}\to O(k)$ or $U(k)$ is a map assigning to each oriented edge or self-loop an $k\times k$ orthogonal matrix or unitary matrix such that for each $\{x,y\}\in E$
\[\sigma_{xy}=\sigma_{yx}^{-1}.\]
Particularly, for the connection $\sigma_{xx}$ of a self-loop at $x$, we have $\sigma_{xx}=\sigma_{xx}^{-1}$.
The couple $(G,\sigma)$ is called a connection graph. A connection graph with an $O(1)$-connection is simply a signed graph. 

Let $\mathbb{K}=\mathbb{R}$ if we have an $O(k)$-connection and $\mathbb{K}=\mathbb{C}$ if we have an $U(k)$-connection. For a vector-valued function $f: V\to \mathbb{K}^k$, the \emph{normalized connection Laplacian} $\Delta^\sigma$ \cite{SW} is defined as follows:
\begin{equation*}
    \Delta^\sigma f(x)=\frac{1}{d_x}\sum_{y:y\sim x}(f(x)-\sigma_{xy}f(y)).
\end{equation*}
The operator $\Delta^\sigma$ has $k|V|$ real eigenvalues (counting multiplicities) lying in the interval $[0,2]$. Let us denote the smallest eigenvalue by $\lambda_1^{\sigma}$. Cheeger-type eigenvalue estimates for graph connection Laplacian have been studied in \cite{BSS}, while Buser-type eigenvalue estimates have been studied in \cite{LMP}. We point out that in the case of $U(1)$-connection, the corresponding connection Laplacian is known as the \emph{magnetic Laplacian}, for which the Cheeger-type estimate and its higher order versions have been established in \cite{LLPP}.

Let $\pi:V\to \mathbb{R}_{>0}$ be a vertex measure of $(G,\sigma)$. 
We extend the definition of the constant $\lambda^{\sigma}_{\infty}$ to be the optimal constant in the following inequality:
\begin{equation*}
\lambda_{\infty}^{\sigma}\sum_{x\in V}\Vert f(x)\Vert^{2}\pi(x)\leq\sum_{x\in V}\sup_{y:y\sim x}\Vert f(x)-\sigma_{xy}f(y)\Vert^{2}\pi(x),
\end{equation*}
where $f: V\to \mathbb{K}^k$ is arbitrary, and $\Vert\cdot\Vert$ stands for the Euclidean or Hermitian norm of vectors in $\mathbb{K}^k$. For a $d$-regular connection graph, it holds that
\begin{equation*}
    2\lambda_1^\sigma\leq \lambda_\infty^\sigma\leq 2d\lambda_1^\sigma.
\end{equation*}

Let us denote by $\mathbb{S}^{k-1}=\{v\in \mathbb{K}^k\,|\,\Vert v\Vert=1\}$. Then, for a subset $V_1\subseteq V$, we define the $\infty$-$\mathbb{S}^{k-1}$-frustration constant $\eta^{\sigma,\pi}_{\infty}(V_1)$ as 
\begin{equation*}
\eta^{\sigma,\pi}_{\infty}(V_{1})=\min_{\tau:V_{1}\rightarrow \mathbb{S}^{k-1}}\frac{1}{2}\sum_{x\in V_{1}}\sup_{\substack{y:y\sim x\\ y\in V_1}}\Vert \tau(x)-\sigma_{xy}\tau(y)\Vert\pi(x),
\end{equation*}
Correspondingly, we define 
\begin{equation*}
    \eta^*_{out}:=\min_{\emptyset\neq V_{1}\subseteq V}\frac{2\eta^{\sigma,\pi}_{\infty}(V_{1})+\pi(\partial_{out}(V_{1}))}{\pi(V_{1})},
\end{equation*}
and 
\begin{equation*}
    \eta^*_S:=\inf_{\emptyset\neq V_{1}\subseteq V}\frac{2\eta^{\sigma,\pi}_{\infty}(V_{1}^{\circ})+\pi(\partial_{S}(V_{1}))}{\pi(V_{1})}.
\end{equation*}
\begin{remark}
    We are following here the terminology and notations from the work of Bandeira, Singer and Spielman \cite{BSS}. We define $p$-$\mathbb{S}^{k-1}$-frustration constant as 
\begin{equation*}
\eta^{\sigma,\pi}_{p}(V_{1})=\min_{\tau:V_{1}\rightarrow \mathbb{S}^{k-1}}\frac{1}{2}\sum_{x\in V_{1}}\sum_{\substack{y:y\sim x
\\y\in V_1}}\Vert \tau(x)-\sigma_{xy}\tau(y)\Vert^p\pi(x).
\end{equation*}
Then the constant $\eta^{\sigma,\pi}_{2}(V_{1})$ reduce to the $\mathbb{S}^{k-1}$-frustration index constant in \cite[(1.5)]{BSS}. Our constants $\eta^*_{out}$ and $\eta^*_S$ should be considered as vertex boundary versions of the \emph{partial $\mathbb{S}^{k-1}$-frustration constant} $\eta^*$ \cite[(2.1)]{BSS} given by 
\[\eta^*:=\min_{\emptyset\neq V_{1}\subseteq V}\frac{\eta^{\sigma,\pi}_{2}(V_{1})+|\partial(V_{1})|}{\mathrm{vol}(V_{1})}.\] Indeed, it is shown in \cite[Theorem 2.2]{BSS} that 
$(\eta^*)^2/10\leq\lambda_1^\sigma\leq \eta^*$.
\end{remark}
A generalization of Theorem \ref{thm:main} is given below.
\begin{theorem}\label{thm:main_connection}
For a connection graph $(G,\sigma)$ with a vertex measure $\pi$, we have 
\begin{align}
\lambda^{\sigma}_{\infty}\geq \left(\sqrt{1+\frac{2}{\sqrt{5}}\eta^*_{out}}-1\right)^2,
\end{align}
and
\begin{align}
\lambda^{\sigma}_{\infty}\geq \left(\sqrt{1+\frac{2}{\sqrt{5}}\eta^*_{S}}-1\right)^2.
\end{align}
\end{theorem}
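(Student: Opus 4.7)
The strategy is to mirror the proof of Theorem \ref{thm:main} step by step, with the scalar truncation $Y_{\sqrt t}$ of Lemma \ref{lemma:ineq c} replaced by a vector-valued spherical truncation $Y_{\sqrt t}:\mathbb{K}^{k}\to\mathbb{K}^{k}$ defined by $Y_{\sqrt t}(w)=w/\Vert w\Vert$ when $\Vert w\Vert\ge\sqrt t$, and $Y_{\sqrt t}(w)=0$ otherwise. Since every $\sigma_{xy}$ is an isometry of $\mathbb{K}^{k}$, one has the crucial equivariance $Y_{\sqrt t}(\sigma_{xy}f(y))=\sigma_{xy}Y_{\sqrt t}(f(y))$, so truncation commutes with the connection. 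After normalizing $f$ so that $\max_{x}\Vert f(x)\Vert=1$ and setting $V^{f}(\sqrt t)=\{x\in V:\Vert f(x)\Vert\ge\sqrt t\}$, the restriction of $Y_{\sqrt t}(f)$ to $V^{f}(\sqrt t)$ is $\mathbb{S}^{k-1}$-valued and therefore a valid test map for $\eta^{\sigma,\pi}_{\infty}$.

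The central new ingredient is a vector analogue of Lemma \ref{lemma:ineq c}: for any $u,v\in\mathbb{K}^{k}$ with $\Vert u\Vert,\Vert v\Vert\le 1$,
\[
\int_{0}^{1}\Vert Y_{\sqrt t}(u)-Y_{\sqrt t}(v)\Vert\,dt\le\frac{\sqrt 5}{2}\,\Vert u-v\Vert\bigl(\Vert u\Vert+\Vert v\Vert\bigr).
\]
Assuming $a:=\Vert u\Vert\ge b:=\Vert v\Vert$, the integral splits at $t=b^{2}$ and $t=a^{2}$, yielding the closed form $b^{2}\Vert u/a-v/b\Vert+(a^{2}-b^{2})$, and polarization gives $\Vert u/a-v/b\Vert^{2}=(\Vert u-v\Vert^{2}-(a-b)^{2})/(ab)$. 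Writing $s=b/a\in[0,1]$ and $T=\Vert u/a-v/b\Vert\in[0,2]$, squaring the desired inequality reduces it to the polynomial inequality $5(1+s)^{2}((1-s)^{2}+sT^{2})\ge 4(s^{2}T+1-s^{2})^{2}$. Viewed as a quadratic in $T$, its discriminant factors as $-20s(1-s)^{3}(1+s)^{2}(4s^{2}+3s+1)$, which is non-positive on $[0,1]$. This both verifies the inequality and identifies $\sqrt 5/2$ as sharp, with extremal configurations attained in the limit $s\to 1$.

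With this vector lemma in hand, the remainder is a line-by-line translation of the proof of Theorem \ref{thm:main} with $|\cdot|$ replaced by $\Vert\cdot\Vert$. The denominator identity $\int_{0}^{1}\pi(V^{f}(\sqrt t))\,dt=\sum_{x}\Vert f(x)\Vert^{2}\pi(x)$ is unchanged, and the analogues of the lower bounds (\ref{eq:lowerboundhout}) and (\ref{eq:lowerboundhS}) hold with $\eta^{\sigma,\pi}_{\infty}$ in place of $\iota^{\sigma,\pi}_{\infty}$, because on $V^{f}(\sqrt t)$ (respectively on $V^{f}(\sqrt t)^{\circ}$) the truncation $Y_{\sqrt t}(f)$ lands in $\mathbb{S}^{k-1}$. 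The upper bound (\ref{ineq:sup}) now picks up the factor $\sqrt 5/2$ from the vector lemma. Carrying out the Cauchy--Schwarz step of (\ref{ineq:Cauchy-Schwarts}) via the triangle inequality $\Vert f(y)\Vert=\Vert\sigma_{xy}f(y)\Vert\le\Vert f(x)\Vert+\Vert f(x)-\sigma_{xy}f(y)\Vert$ produces a quadratic inequality of the form $(A/B)^{2}+2(A/B)\ge\tfrac{2}{\sqrt 5}\eta^{*}_{out}$ in $A/B$, whose positive root yields $\lambda^{\sigma}_{\infty}\ge\bigl(\sqrt{1+\tfrac{2}{\sqrt 5}\eta^{*}_{out}}-1\bigr)^{2}$. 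The estimate for $\eta^{*}_{S}$ is obtained by invoking the analogue of (\ref{eq:lowerboundhS}) in place of (\ref{eq:lowerboundhout}).

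The main obstacle is establishing the sharp vector-valued inequality in the second step; once it is in place, every remaining step is a mechanical translation of the scalar argument, facilitated by the fact that the unitary (or orthogonal) connection $\sigma_{xy}$ preserves the norm and hence commutes with the spherical truncation $Y_{\sqrt t}$.
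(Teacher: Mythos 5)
Your proposal is correct and follows essentially the same route as the paper: both replace the scalar truncation $Y_{\sqrt t}$ of Lemma \ref{lemma:ineq c} with the spherical truncation onto $\mathbb{S}^{k-1}\cup\{0\}$, exploit unitarity of $\sigma_{xy}$ to commute truncation with the connection, and push the layer-cake and Cauchy--Schwarz machinery of Theorem \ref{thm:main} through verbatim with $\iota^{\sigma,\pi}_\infty$ replaced by $\eta^{\sigma,\pi}_\infty$. The one difference is that the paper imports the key vector-valued inequality as Lemma \ref{lem:BSS} from Bandeira--Singer--Spielman \cite{BSS}, whereas you supply a self-contained proof; I checked your reduction to the quadratic in $T=\Vert u/a - v/b\Vert$ and the discriminant factorization $-20s(1-s)^3(1+s)^2(4s^2+3s+1)\le 0$ on $s\in[0,1]$, and both are correct (with the $s=0$ case handled directly), so this is a valid and rather clean derivation of the BSS estimate with the sharp constant $\sqrt 5/2$.
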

For the proof, we need the following lemma from \cite[(3.1) and Appendeix A]{BSS}.
\begin{lemma}\label{lem:BSS}
For any $t\in [0,1]$, we define $Y_{t}:\overline{B_{1}(0)}:=\{v\in\mathbb{K}^{k}\,|\,\Vert v\Vert \leq 1\}\rightarrow\mathbb{S}^{k-1}\cup\{0\}$ as
$$ Y_{t}(z):=\begin{cases}
    \frac{z}{\Vert z\Vert}, & \text{if $\Vert z\Vert \geq t$,}\\
    0, & \text{otherwise,}
\end{cases}
$$
where we use the notation that $z/\Vert z\Vert$ be an arbitrarily chosen unit vector when $\Vert z\Vert=0$. Then, we have for any $z_{1}$, $z_{2}\in \mathbb{K}^{k}$, 
$$\int_{0}^{1}\left\Vert Y_{\sqrt{t}}(z_{1})-Y_{\sqrt{t}}(z_{2})\right\Vert dt\leq\frac{\sqrt{5}}{2}(\Vert z_{1}-z_{2}\Vert )(\Vert z_{1}\Vert+\Vert z_{2}\Vert).$$
\end{lemma}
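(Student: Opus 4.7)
The plan is to mimic the proof of Theorem \ref{thm:main} verbatim, replacing absolute values by Euclidean/Hermitian norms, the scalar indicator $Y_{\sqrt t}(\cdot)$ by its vector version from Lemma \ref{lem:BSS}, and switching functions $\tau:V_1\to\{\pm 1\}$ by sphere-valued switching functions $\tau:V_1\to \mathbb{S}^{k-1}$. It suffices to verify the Poincar\'e-type inequality defining $\lambda^\sigma_\infty$ for arbitrary $f:V\to\mathbb{K}^k$ normalized so that $\max_{x\in V}\|f(x)\|=1$; all subsequent estimates are homogeneous in $f$.

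For such an $f$, and for each $t\in[0,1]$, set $V^f(\sqrt t)=\{x\in V:\|f(x)\|\geq\sqrt t\}$. The function $x\mapsto Y_{\sqrt t}(f(x))$ provided by Lemma \ref{lem:BSS} takes values in $\mathbb{S}^{k-1}$ on $V^f(\sqrt t)$ and vanishes elsewhere, so its restriction to $V^f(\sqrt t)$ (or to $V^f(\sqrt t)^\circ$) is an admissible competitor in the definition of $\eta^{\sigma,\pi}_\infty(\cdot)$. I would then study the ratio
\[
\frac{\int_0^1 \sum_{x\in V}\sup_{y:y\sim x}\|Y_{\sqrt t}(f(x))-\sigma_{xy}Y_{\sqrt t}(f(y))\|\,\pi(x)\,dt}{\int_0^1 \pi\bigl(V^f(\sqrt t)\bigr)\,dt}.
\]
The denominator equals $\sum_{x\in V}\|f(x)\|^2\pi(x)=:B^2$ by the layer-cake identity $\int_0^1\mathbf 1_{\|f(x)\|^2\geq t}\,dt=\|f(x)\|^2$, exactly as in \eqref{eq:denominator}. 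The numerator is bounded above using Lemma \ref{lem:BSS} applied edge-by-edge (with $z_1=f(x)$, $z_2=\sigma_{xy}f(y)$, noting $\sigma_{xy}$ is an isometry so $\|Y_{\sqrt t}(\sigma_{xy}f(y))-\sigma_{xy}Y_{\sqrt t}(f(y))\|=0$), giving
\[
\tfrac{\sqrt 5}{2}\sum_{x\in V}\sup_{y:y\sim x}\|f(x)-\sigma_{xy}f(y)\|\bigl(\|f(x)\|+\|f(y)\|\bigr)\pi(x).
\]

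For the lower bound on the numerator, I would split the outer sum over $x\in V^f(\sqrt t)$ versus $x\notin V^f(\sqrt t)$ (respectively over $V^f(\sqrt t)^\circ$ and its complement for the symmetric version), exactly as in the derivations of \eqref{eq:lowerboundhout} and \eqref{eq:lowerboundhS}. Internal edges contribute $2\eta^{\sigma,\pi}_\infty(V^f(\sqrt t))$ (resp.\ $2\eta^{\sigma,\pi}_\infty(V^f(\sqrt t)^\circ)$), since on $V^f(\sqrt t)$ the restriction of $Y_{\sqrt t}(f)$ is sphere-valued; cross-edges contribute the boundary term $\pi(\partial_{out}(V^f(\sqrt t)))$ (resp.\ $\pi(\partial_S(V^f(\sqrt t)))$) because each such edge contributes $\|Y_{\sqrt t}(f(x))-0\|=1$. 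By the definition of $\eta^*_{out}$ (resp.\ $\eta^*_S$) and averaging over $t$ via the mean-value argument at the end of the proof of Theorem \ref{thm:main}, the integrated numerator is bounded below by $\eta^*_{out}\cdot B^2$ (resp.\ $\eta^*_S\cdot B^2$).

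Combining, and setting $A^2:=\sum_{x}\sup_{y:y\sim x}\|f(x)-\sigma_{xy}f(y)\|^2\pi(x)$, Cauchy--Schwarz gives
\[
\sum_{x}\sup_{y:y\sim x}\|f(x)-\sigma_{xy}f(y)\|(\|f(x)\|+\|f(y)\|)\pi(x)\leq A^2+2AB,
\]
in parallel with \eqref{ineq:Cauchy-Schwarts} (the term $\|f(y)\|-\|f(x)\|$ is bounded by $\|f(x)-\sigma_{xy}f(y)\|$ via the reverse triangle inequality and the fact that $\sigma_{xy}$ preserves norm). The resulting inequality $\tfrac{\sqrt 5}{2}(A^2+2AB)\geq \eta^*_{out}B^2$, i.e.\ $(A^2+2AB)/B^2\geq \tfrac{2}{\sqrt 5}\eta^*_{out}$, yields $A^2/B^2\geq\bigl(\sqrt{1+\tfrac{2}{\sqrt 5}\eta^*_{out}}-1\bigr)^2$ by the same quadratic solve used in Theorem \ref{thm:main}; the $\eta^*_S$ inequality is identical. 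The main obstacle is the correct handling of cross-edges in the numerator lower bound: one must exploit that $\sigma_{xy}\in O(k)$ or $U(k)$ is an isometry so that a vertex in $V^f(\sqrt t)$ adjacent to one outside still contributes a full unit term, and that the sphere-valued switching function on $V^f(\sqrt t)$ is competitive for $\eta^{\sigma,\pi}_\infty$; once that is nailed down, everything is a direct vector-valued transcription of Section \ref{signed_graph}.
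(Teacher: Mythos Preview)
Your proposal does not address the stated lemma at all: Lemma~\ref{lem:BSS} is the pointwise integral inequality
\[
\int_0^1 \bigl\|Y_{\sqrt t}(z_1)-Y_{\sqrt t}(z_2)\bigr\|\,dt \le \tfrac{\sqrt5}{2}\,\|z_1-z_2\|\,(\|z_1\|+\|z_2\|)
\]
for two vectors $z_1,z_2$, whereas what you have written is a proof of Theorem~\ref{thm:main_connection}, which \emph{uses} Lemma~\ref{lem:BSS} as a black box. You have treated the statement to be proved as an available tool and then reproduced (correctly, as far as it goes) the argument of Section~\ref{signed_graph} adapted to connection graphs. That is precisely how the paper proves Theorem~\ref{thm:main_connection}, but it is not a proof of the lemma.

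The actual content of Lemma~\ref{lem:BSS} is an elementary computation plus a geometric estimate, taken from \cite[(3.1) and Appendix~A]{BSS}: assuming without loss of generality $\|z_2\|\le\|z_1\|$, the integrand is piecewise constant in $t$, equal to $\bigl\|\tfrac{z_1}{\|z_1\|}-\tfrac{z_2}{\|z_2\|}\bigr\|$ on $[0,\|z_2\|^2]$, equal to $1$ on $(\|z_2\|^2,\|z_1\|^2]$, and $0$ beyond; integrating gives
\[
\|z_2\|^2\,\Bigl\|\tfrac{z_1}{\|z_1\|}-\tfrac{z_2}{\|z_2\|}\Bigr\|+\bigl(\|z_1\|^2-\|z_2\|^2\bigr),
\]
and one then bounds this by $\tfrac{\sqrt5}{2}\|z_1-z_2\|(\|z_1\|+\|z_2\|)$ via the normalization inequality in \cite[Proposition~A.1]{BSS} (which controls $\|\tfrac{z_1}{\|z_1\|}-\tfrac{z_2}{\|z_2\|}\|$ in terms of $\|z_1-z_2\|$ and the ratio of norms). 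None of this appears in your write-up.
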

\begin{proof}[Proof of Theorem \ref{thm:main_connection}]
Without loss of generality, we assume that $\max_{x\in V}\Vert f(x)\Vert=1$. Then, for any given $t\in [0,1]$, we define the subset 
$$V^{f}(\sqrt{t})=\left\{x\in V\left|\,\Vert f(x)\Vert\geq\sqrt{t}\right.\right\}.$$
Let $Y_t$ be defined as in Lemma \ref{lem:BSS}. Then the proof follows along the same line as the proof of Theorem \ref{thm:main} by considering the quantity
\begin{equation}
    \frac{\int_0^1\sum_{x\in V}\sup_{y:y\sim x}\left\Vert Y_{\sqrt{t}}(f(x))-\sigma_{xy}Y_{\sqrt{t}}(f(y))\right\Vert\pi(x)dt}{\int_0^1\pi\left(V^f(\sqrt{t})\right)dt}
\end{equation}
and applying Lemma \ref{lem:BSS} instead of Lemma \ref{lemma:ineq c}. We omit the details here.
\end{proof}

To conclude this section, we discuss another extension to a connection graph $(G,\sigma)$ with $k$-cyclic group connections. That is, 
we have $\sigma:E^{or}\rightarrow \mathcal{S}_k:=\{\xi^{0},\xi^{1},\dots,\xi^{k-1}\}$, where $\xi^{i},i=0,1,\dots,k-1$ are the $k$-th primitive roots of unity. We define the constant $\lambda^{\sigma}_{\infty}$ as the optimal constant such that 
$$\lambda^{\sigma}_{\infty}\sum_{x\in V}|f(x)|^{2}\pi(x)\leq\sum_{x\in V}\sup_{y:y\sim x}|f(x)-\sigma_{xy}f(y)|^{2}\pi(x),$$
holds for any function $f:V\rightarrow\mathbb{C}$,
where $|\cdot|$ stands for the norm of a complex number. Let us define the two constants
\begin{equation*}
    h^\sigma_{out}:=\min_{\emptyset\neq V_{1}\subseteq V}\frac{2\iota^{\sigma,\pi}_{\infty}(V_{1})+\pi(\partial_{out}(V_{1}))}{\pi(V_{1})},\,\,\text{and}\,\,
    h^\sigma_S:=\inf_{\emptyset\neq V_{1}\subseteq V}\frac{2\iota^{\sigma,\pi}_{\infty}(V_{1}^{\circ})+\pi(\partial_{S}(V_{1}))}{\pi(V_{1})}.
\end{equation*}
with 
\begin{equation*}
    \iota^{\sigma,\pi}_{\infty}(V_{1})=\min_{\tau:V_1\to \mathcal{S}_k}\frac{1}{2}\sum_{x\in V_1}\sup_{\substack{y:y\sim x\\y\in V_1}}|\tau(x)-\sigma_{xy}\tau(y)|\pi(x),
\end{equation*} for any subset $V_1\subseteq V$.
Then we have the following result.
\begin{theorem}\label{thm:main_cyclic}
For a connection graph $(G,\sigma)$ with a $k$-cyclic group connection $\sigma:V\rightarrow \mathcal{S}_k$, we have the following inequality
$$\lambda^{\sigma}_{\infty}\geq \left(\sqrt{1+\frac{1}{2}h^\sigma_{out}}-1\right)^2.$$
\end{theorem}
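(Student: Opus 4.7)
The plan is to imitate the proof of Theorem \ref{thm:main}, substituting a $k$-valued cutoff adapted to the cyclic symmetry for the two-valued cutoff $Y_t$ of Lemma \ref{lemma:ineq c}. Concretely, for $t\in[0,1]$ I would define a projection $Y_{\sqrt{t}}:\overline{B_1(0)}\to\mathcal{S}_k\cup\{0\}$ sending each $z$ with $|z|\ge\sqrt{t}$ to the element of $\mathcal{S}_k$ nearest to $z/|z|$ (with some tiebreaking rule on sector boundaries), and each $z$ with $|z|<\sqrt{t}$ to $0$. The essential compatibility with the cyclic connection is the intertwining property $Y_{\sqrt{t}}(\xi z)=\xi\, Y_{\sqrt{t}}(z)$ for $\xi\in\mathcal{S}_k$, which is the $k$-cyclic analogue of the odd symmetry $Y_t(-z)=-Y_t(z)$ used in the signed proof and in particular yields $\sigma_{xy}Y_{\sqrt{t}}(f(y))=Y_{\sqrt{t}}(\sigma_{xy}f(y))$. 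Then, for any $f:V\to\mathbb{C}$ normalised so that $\max_x|f(x)|=1$, the restriction of $Y_{\sqrt{t}}\circ f$ to $V^f(\sqrt{t})=\{x:|f(x)|\ge\sqrt{t}\}$ is an admissible switching function in the definition of $\iota^{\sigma,\pi}_\infty(V^f(\sqrt{t}))$.

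Granting a $k$-cyclic analogue of Lemma \ref{lemma:ineq c} with constant $2$, the remainder of the argument is essentially a line-for-line repetition of the proof of Theorem \ref{thm:main}. The layer-cake identity gives $\int_0^1\pi(V^f(\sqrt{t}))\,dt=\sum_x|f(x)|^2\pi(x)$; the numerator $\int_0^1\sum_x\sup_{y\sim x}|Y_{\sqrt{t}}(f(x))-\sigma_{xy}Y_{\sqrt{t}}(f(y))|\pi(x)\,dt$ admits a lower bound of $\int_0^1[2\iota^{\sigma,\pi}_\infty(V^f(\sqrt{t}))+\pi(\partial_{out}V^f(\sqrt{t}))]\,dt$ by the same inside/outside case split, and an upper bound via the coarea lemma followed by Cauchy-Schwarz amounting to $2(A^2+2AB)$ with $A^2=\sum_x\sup_{y\sim x}|f(x)-\sigma_{xy}f(y)|^2\pi(x)$ and $B^2=\sum_x|f(x)|^2\pi(x)$. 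A mean-value argument picks a $t'$ at which the ratio of these quantities is at least $h^\sigma_{out}$, so that $\tfrac12 h^\sigma_{out}\le(A^2+2AB)/B^2$, which solves to $A/B\ge\sqrt{1+\tfrac12 h^\sigma_{out}}-1$ and yields the stated bound for $\lambda_\infty^\sigma$.

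The hard part is establishing the coarea inequality
\[
\int_0^1\bigl|Y_{\sqrt{t}}(z_1)-Y_{\sqrt{t}}(z_2)\bigr|\,dt\le 2\,|z_1-z_2|(|z_1|+|z_2|)\qquad\text{for all }z_1,z_2\in\overline{B_1(0)}.
\]
In the signed case the proof was immediate because $Y_t$ has a single discontinuity locus at the origin; here $Y_{\sqrt{t}}$ additionally jumps across the $k$ sector boundaries on the unit circle, and two complex numbers close in modulus but straddling such a boundary would defeat any naive pointwise bound. To circumvent this while staying within the Bobkov-Houdr\'e-Tetali scheme, I would introduce a rotation parameter $\omega\in[0,2\pi/k)$, consider the family $Y_{\sqrt{t},\omega}:\overline{B_1(0)}\to e^{i\omega}\mathcal{S}_k\cup\{0\}$ of rotated projections, and establish the averaged form
\[
\int_0^1\frac{k}{2\pi}\int_0^{2\pi/k}\bigl|Y_{\sqrt{t},\omega}(z_1)-Y_{\sqrt{t},\omega}(z_2)\bigr|\,d\omega\,dt\le 2\,|z_1-z_2|(|z_1|+|z_2|),
\]
whose key ingredient is that, for uniform random $\omega$, the probability that $z_1,z_2$ land in different rotated sectors is proportional to their angular separation $|\arg z_1-\arg z_2|$, itself controlled by $|z_1-z_2|/\min(|z_1|,|z_2|)$; combined with the elementary modulus-only estimate $|r_1^2-r_2^2|\le|z_1-z_2|(r_1+r_2)$, this produces the constant $2$. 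Since the quantity $|\eta-\sigma_{xy}\eta'|$ is invariant under multiplication of $\eta,\eta'$ by a common unimodular scalar, the rotated switching function $e^{-i\omega}Y_{\sqrt{t},\omega}\circ f$ is genuinely $\mathcal{S}_k$-valued and gives the same $\iota^{\sigma,\pi}_\infty$ bounds, so the $\omega$-averaging is transparent on the lower-bound side and the full argument goes through with the factor $\tfrac12$ as stated.
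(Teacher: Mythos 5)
Your proposal follows essentially the same route as the paper: both replace the two-valued cutoff $Y_t$ of Lemma \ref{lemma:ineq c} by a rotation-averaged $k$-sector projection onto $\mathcal{S}_k\cup\{0\}$ and then run the argument of Theorem \ref{thm:main} verbatim, with the key estimate being the rotation-averaged coarea inequality with constant $2$ (the paper simply imports this as Lemma \ref{lem:LLPP} from \cite[Lemma 4.2]{LLPP}, while you sketch a proof of it via the angular-separation probability argument). The only differences are cosmetic: you average $\omega$ over a fundamental domain $[0,2\pi/k)$ with weight $k/2\pi$ rather than over $[0,2\pi)$ with weight $1/2\pi$, and you phrase the sector assignment as nearest-point rather than half-open arcs; neither changes the averaged inequality or the intertwining property $Y_{\sqrt t,\omega}(\xi z)=\xi\,Y_{\sqrt t,\omega}(z)$ that makes the restriction an admissible switching function.
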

For the proof, we replace $Y_{t}$ by another indicator function $Y_{t,\theta}$: For $\theta\in[0,2\pi)$ and $k\in \mathbb{N}$, define $k$ disjoint sectorial regions
$$Q^{\theta}_{j}:=\left\{re^{i\alpha}\in \overline{B_{1}(0)}\,\left|\,r\in(0,1],\,\alpha\in\left[\theta+\frac{2\pi j}{k},\theta+\frac{2\pi(j+1)}{k}\right)\right.\right\},$$
where $j=0,1,\dots,k-1$. Then, for any $t\in(0,1],$ define $Y_{t,\theta}:\overline{B_{1}(0)}\rightarrow\mathbb{C}$ as
$$Y_{t,\theta}(z)=\left\{
\begin{aligned}
\xi^{j},\textrm{ if }z\in Q^{\theta}_{j}\setminus B_{t}(0),\\
0,\quad\quad\,\,\textrm{ if }z\in B_{t}(0).
\end{aligned}
\right.
$$
The following lemma is taken from \cite[Lemma 4.2]{LLPP}.
\begin{lemma}\label{lem:LLPP}
For any two points $z_{1}$, $z_{2}\in\overline{B_{1}(0)}$, we have
$$\frac{1}{2\pi}\int_{0}^{2\pi}\int_{0}^{1}\left|Y_{\sqrt{t},\theta}(z_{1})-Y_{\sqrt{t},\theta}(z_{2})\right|dtd\theta\leq2|z_{1}-z_{2}|(|z_{1}|+|z_{2}|).$$
\end{lemma}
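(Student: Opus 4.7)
The plan is to compute the double integral directly by splitting the inner $t$-integral into three natural intervals based on the moduli $|z_i|$. Writing $z_i = r_i e^{i\alpha_i}$ with $r_1 \leq r_2$ (after relabeling) and setting $d = (\alpha_2 - \alpha_1) \bmod 2\pi \in [0, 2\pi)$, I would partition $[0,1]$ as $[0, r_1^2] \cup [r_1^2, r_2^2] \cup [r_2^2, 1]$. On $[r_2^2, 1]$ both $z_1, z_2$ lie in $B_{\sqrt t}(0)$ so the integrand vanishes. On $[r_1^2, r_2^2]$ exactly one of the two arguments is inside $B_{\sqrt{t}}(0)$, so the integrand equals $1$ independently of $\theta$, contributing $2\pi(r_2^2 - r_1^2)$. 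After dividing by $2\pi$ this yields $(r_2-r_1)(r_1+r_2) \leq |z_1-z_2|(|z_1|+|z_2|)$ via the elementary estimate $|r_2-r_1| \leq |z_1-z_2|$.

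The main work will be the inner range $t \in [0, r_1^2]$, where both $Y_{\sqrt t,\theta}(z_i)$ equal roots of unity $\xi^{j_i(\theta)}$ with $j_i(\theta)$ the sector index of $z_i$ under the rotation by $\theta$. Since the integrand is independent of $t$ there, this contribution becomes $r_1^2 \int_0^{2\pi} |\xi^{j_1(\theta)} - \xi^{j_2(\theta)}| d\theta$, and the key analytic step I would establish is
\[
\frac{1}{2\pi}\int_0^{2\pi} \bigl|\xi^{j_1(\theta)} - \xi^{j_2(\theta)}\bigr|\, d\theta \;\leq\; 2\sin(d/2).
\]
To prove this I would argue by direct sector-counting. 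Setting $n := \lfloor kd/(2\pi)\rfloor$ and $\lambda := \{kd/(2\pi)\}$, a careful tracking of how the $k$ rotating boundaries $\{\theta + 2\pi j/k\}_{j=0}^{k-1}$ sweep through the arc from $\alpha_1$ to $\alpha_2$ shows, using $(2\pi/k)$-periodicity of the integrand, that the Lebesgue measure of those $\theta \in [0, 2\pi)$ for which $(j_2-j_1) \bmod k = n+1$ equals $2\pi\lambda$, and its complement gives sector-difference exactly $n$. Since $|\xi^m - 1| = 2\sin(\pi m/k)$ and $s \mapsto 2\sin(\pi s/k)$ is concave on $[0, k]$, the average $2\sin(\pi n/k)(1-\lambda) + 2\sin(\pi(n+1)/k)\lambda$ is dominated by $2\sin(\pi(n+\lambda)/k) = 2\sin(d/2)$, as required.

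Assembling the pieces, the inner-range contribution after division by $2\pi$ is at most $2r_1^2\sin(d/2)$. The chord identity $|z_1-z_2|^2 = (r_2-r_1)^2 + 4 r_1 r_2 \sin^2(d/2)$ together with $r_2 \geq r_1$ gives $|z_1-z_2| \geq 2r_1|\sin(d/2)|$, whence $2r_1^2 \sin(d/2) \leq r_1|z_1-z_2| \leq (|z_1|+|z_2|)|z_1-z_2|$. Summing this with the middle-range bound yields exactly $2|z_1-z_2|(|z_1|+|z_2|)$, matching the claimed constant. I expect the main obstacle to be the sector-counting step leading to the bound $\leq 2\sin(d/2)$: one must track which of the rotating boundaries fall in the arc between $\alpha_1$ and $\alpha_2$ as $\theta$ varies, which reduces by periodicity to a careful one-variable case analysis, and then apply the concavity of $\sin$ on $[0,\pi]$ at the right place. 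Once that estimate is in hand, the rest is a clean sum of elementary bounds.
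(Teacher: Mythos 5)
Your proof is correct. The paper itself does not prove this lemma---it only cites \cite[Lemma 4.2]{LLPP}---and your argument is essentially the standard one from that reference: split the $t$-integral at $r_1^2$ and $r_2^2$, observe that the middle range contributes $(r_2^2-r_1^2)\le |z_1-z_2|(|z_1|+|z_2|)$, and for $t\le r_1^2$ average over $\theta$ using the equidistribution of $u\bmod 1$ (which correctly yields measures $2\pi\lambda$ and $2\pi(1-\lambda)$ for sector-differences $n+1$ and $n$) together with concavity of $s\mapsto\sin(\pi s/k)$ to get the bound $2\sin(d/2)$, which the chord identity converts into $r_1|z_1-z_2|$. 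All the individual estimates check out and the two contributions sum to exactly the claimed constant $2$.
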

The proof of Theorem \ref{thm:main_cyclic} follows along the same line as the proof of Theorem \ref{thm:main} via using the Lemma \ref{lem:LLPP}  instead of Lemma \ref{lemma:ineq c}.

\section*{Acknowledgement}
We thank the referees for their helpful comments and suggestions. SL thanks Tuan Tran for helpful discussions. This work is supported by the National Key R and D Program of China 2020YFA0713100, the National Natural Science Foundation of China (No. 12031017), and Innovation Program for Quantum Science and Technology 2021ZD0302902.

\end{document}